\newtheorem{thm}{Theorem}
\newtheorem{lem}{Lemma}
\newcommand{\expect}[1]{\mathbb{E}\left\{#1\right\}}
\newcommand{\defequiv}{\mbox{\raisebox{-.3ex}{$\overset{\vartriangle}{=}$}}}
\newcommand{\bv}[1]{{\boldsymbol{#1} }}
\begin{document}

\title
  {Efficient Algorithms for Renewable Energy Allocation to Delay Tolerant Consumers}
\author{Michael J. Neely, Arash Saber Tehrani, Alexandros G. Dimakis%
$\vspace{-.3in}$
\thanks{The authors are with the Electrical Engineering department at the University
of Southern California, Los Angeles, CA.} 
\thanks{This material is supported in part  by one or more of 
the following: the DARPA IT-MANET program
grant W911NF-07-0028, 
the NSF Career grant CCF-0747525.}}

\markboth{}{Neely}

\maketitle

\begin{abstract}   
We investigate the problem of allocating energy from renewable sources
to flexible consumers in electricity markets.  We assume there is a renewable energy supplier that 
provides energy according to a time-varying (and possibly unpredictable) 
supply process.  The plant must serve consumers within a 
specified delay window, and incurs a cost of drawing energy from other (possibly non-renewable) 
sources if its own supply is not sufficient to meet the deadlines.   We formulate two stochastic optimization
problems: The first seeks to minimize the time average 
cost of using the other sources (and hence strives for the most efficient 
utilization of the renewable source).  The second  allows the 
renewable source to dynamically set
a price for its service, and seeks to maximize the resulting 
time average profit.  These problems are solved via the Lyapunov
optimization technique.  
Our resulting algorithms do
not require knowledge of the statistics of the time-varying supply
and demand processes and are robust to arbitrary sample path variations.
\end{abstract} 

\section{Introduction} 

The highly variable and unpredictable nature of some renewable energy sources (such as wind and solar)
has been a major obstacle to their integration. 
For example, a recent study conducted by Enernex for wind power integration in
Minnesota~\cite{enerex} indicates that the variability and day-ahead
forecast errors will result in an additional $\$2.11-\$4.41$  (for
$15\%$ and $25\%$ penetration) per MWh of delivered wind power. Along
the same lines, the CAISO report~\cite{caiso} predicted that ten minute real-time
energy prices could increase substantially due to wind forecasting
errors and identified day-ahead and same-day forecasts and modeling as
important tasks for integration of renewable resources.

The necessity to offset variability by stand-by generators and system backup investments substantially
increases the cost of renewables. One approach that can mitigate this problem is to couple this supply variability to demand side flexibility~\cite{pap1,pap2,pap3}. The renewable power suppliers could sell their energy at a lower price to consumers that are willing to wait in a queue, given that it will be served to them within a pre-agreed deadline. This essentially allows a lower price of renewable energy to consumers willing to provide this extra time flexibility.  The renewable power supplier can now use this 
flexibility to deliver the energy when it is available.\footnote{Note that in this paper we assume no energy storage although it can be naturally incorporated into our framework.}
The supplier will sometimes, hopefully rarely, be in a situation when a prior deadline commitment cannot be matched and will have to purchase the extra energy from the energy spot market (or maintain a costly system backup). Papavasiliou and Oren~\cite{pap3} introduced this problem and proposed an exact backward dynamic programming algorithm and an efficient approximate dynamic programming algorithm for the scheduling decisions of the renewable energy supplier.

In this paper we build a similar model and utilize the technique of Lyapunov optimization initially developed in \cite{now}\cite{neely-energy-it}\cite{neely-thesis} for dynamic control of queueing
systems for wireless networks. We show that the queuing model naturally fits in the renewable supplier
scheduling problem and present a simple energy allocation algorithm that does not require prior statistical information and 
is provably close to optimal. The proposed framework can be extended to include pricing, multiple queues (with different deadlines) and different objective functions, building on the general results from~\cite{now}. We finally evaluate the proposed algorithm on actual CAISO spot market and wind energy production data and show substantial reduction to the operating costs for the renewable supplier 
compared to a simple greedy algorithm. 

In particular, we consider a single renewable energy plant that operates in discrete time with unit
timeslots $t \in \{0, 1, 2, \ldots\}$, and provides $s(t)$ units of energy on each slot $t$. 
The $s(t)$ process corresponds to the renewable supply and is assumed to be time varying and unpredictable. 
Since we assume no storage, the energy $s(t)$ must either be used or
wasted.  Demands for this energy arrive randomly according to a process $a(t)$ (being the
amount of energy that is requested on slot $t$).  We assume that consumers requesting energy are flexible, and can 
tolerate their energy requests being satisfied with some delay.   The requests are thus stored in a queue.  
Every slot $t$, we use all of our supply $s(t)$ to serve the requests in the queue in a First-In-First-Out (FIFO) manner.  
However, this may not be enough to 
meet all of the requests within a timely manner, and hence we also decide to purchase an amount of energy $x(t)$ from an 
outside (possibly non-renewable) plant.  Letting $Q(t)$ represent the total energy requests in our queue on slot $t$, we
have the following update equation: 
\begin{equation} \label{eq:q-update} 
 Q(t+1) = \max[Q(t) - s(t) - x(t), 0] + a(t) 
\end{equation} 

The value $x(t)$ is a control decision variable, and incurs a cost $x(t)\gamma(t)$ on slot $t$, where $\gamma(t)$ is a process
that specifies the per-unit-cost of using the outside energy supply on slot $t$. 
The value of $\gamma(t)$ can represent a current market price for guaranteed 
energy services from (possibly non-renewable) sources.   As such, the decision to use $x(t)$ units of energy on slot $t$ 
means the outside source agrees to provide this much energy at time
$t+K$ for some fixed (and small) integer $K\geq 0$, for the price $x(t)\gamma(t)$. Without loss of generality, we 
assume throughout
that $K=0$, so that the energy request is removed from our queue on the same slot in which we decide to 
use  the outside source. In the actual  implementation, 
requests that are served from the outside source can be removed from the primary queue
$Q(t)$ but must still wait an additional $K$ slots.

We first look at the problem of choosing $x(t)$ to stabilize our queue $Q(t)$ while minimizing 
the time average of the cost $x(t)\gamma(t)$ and also providing a guarantee on the maximum delay $D_{max}$ 
spent in the queue.  If the future values of 
supply, demand, and market price values $(s(t), a(t), \gamma(t))$ were known in advance, one could in principle
make $x(t)$ decisions
that minimize total time average cost, possibly choosing $x(t)=0$ for all $t$ if it is possible to meet all demands
using only the renewable energy $s(t)$.   The challenge is to provide an efficient algorithm without knowing the future. 
To this end, we first assume the vector process $(s(t), a(t), \gamma(t))$ is i.i.d. over slots but has an
unknown probability distribution.  Under this assumption, we 
develop an algorithm, parameterized by a positive value $V$, 
 that comes within $O(1/V)$ of the minimum time
average cost required to stabilize the queue, with a worst-case delay guarantee that is $O(V)$. 
The parameter $V$ can be tuned as desired to provide average cost arbitrarily close to optimal, with a tradeoff
in delay. 
We further show that the same algorithm is provably 
robust to non-i.i.d. situations, and operates efficiently even
for arbitrary sample paths for $(s(t), a(t), \gamma(t))$.   Finally, we extend the problem to consider pricing decisions
at the renewable energy source, so that the requests $a(t)$ are now influenced by the current prices.  In this case, 
we design a related algorithm that maximizes time average profit. 

The Lyapunov optimization technique we use~\cite{now}\cite{neely-energy-it}\cite{neely-thesis}
is related to the primal-dual and 
fluid-model techniques in \cite{atilla-fairness}\cite{stolyar-greedy}\cite{vijay-allerton02}\cite{prop-fair-down}. 
The work in \cite{now}\cite{neely-energy-it}\cite{neely-thesis} establishes a general $[O(1/V), O(V)]$ 
performance-congestion
tradeoff for stochastic network optimization problems with i.i.d. (and more general ergodic) processes.  
Recent work in \cite{neely-universal-scheduling}\cite{neely-inventory-control-arxiv} provides similar results on a sample path basis, without any probabilistic assumptions.  We apply these results in our current paper.  Further, we 
extend the theory by introducing a novel virtual queue that turns an average delay constraint of $O(V)$ (which is achievable
with the prior analytical techniques) into a \emph{worst case delay guarantee} that is also $O(V)$. 

It is useful to distinguish the proposed Lyapunov optimization method that we use in this paper from dynamic programming techniques.  
Dynamic programming can be used to solve stronger versions of our problem (such as minimizing average cost subject to a delay constraint) see e.g.~\cite{pap3}.  However, dynamic programming requires more stringent 
system modeling assumptions, has a more complex
solution that typically requires knowledge of the supply, demand, and market price probabilities, and 
cannot necessarily adapt if these probabilities change and/or if there are unmodeled correlations
in the actual processes.    It involves
computation of a value function that can be difficult when the state space of the system is large, and suffers
from a \emph{curse of dimensionality} when applied to large dimensional systems (such as systems with many queues).

In contrast, Lyapunov optimization is relatively simple to implement, does not need a-priori statistical knowledge, and 
is robust to non-i.i.d. and non-ergodic behavior. 
Further, it has no curse of dimensionality and hence can be applied just as easily in extended formulations that have multiple queues corresponding to 
multiple customers requesting different deadlines, contrary to dynamic programming~\cite{pap3} which would require exponential complexity in the number of users. 

The reason for this efficiency is that Lyapunov optimization relaxes the question that dynamic programming asks:  Rather than minimizing time average cost subject to a delay constraint, it seeks to push time average cost towards the more ambitious
minimum over all possible algorithms that can stabilize the queue (without regard to the delay constraint).  It then specifies
an explicit bound on the resulting queue congestion, which depends on the desired 
proximity to the minimum cost (as defined by the $[O(1/V), O(V)]$ performance-congestion tradeoff).  
However, the resulting time average queue congestion (and delay) that is achieved is not necessarily the optimal that could be
achieved over all possible algorithms that yield the same time average performance cost.

In the next section, we formulate the basic model under the assumption that the
$(s(t), a(t), \gamma(t))$ vector is i.i.d. over slots, and present the main allocation algorithm. 
Section \ref{section:pricing} 
extends to the case when the renewable power source can set a price for its services.   These algorithms are provably
robust to non-i.i.d. situations and arbitrary sample paths of events, as shown in Section \ref{section:non-iid}.
Section \ref{section:exp} presents an experimental evaluation of our algorithm on a real six-month data set and shows substantial gains over a simple greedy scheduling algorithm.

\section{The Dynamic Allocation Algorithm}  \label{section:model} 

Suppose that the supply process $s(t)$, the request process $a(t)$, and the market price process $\gamma(t)$, as 
described in the introduction, form a vector $(s(t), a(t), \gamma(t))$ that is i.i.d. over slots with some unknown probability
distribution.  We further assume the values of $s(t)$, $a(t)$, $\gamma(t)$ are deterministically bounded by finite constants
$s_{max}$, $a_{max}$, $\gamma_{max}$, so that: 
\begin{eqnarray}  
0 \leq s(t) \leq s_{max}  \: , 
0 \leq a(t) \leq a_{max}   \:  , 
0 \leq \gamma(t) \leq \gamma_{max} \:  \forall t  \label{eq:sample-path-bounds} 
\end{eqnarray}

The queue backlog $Q(t)$ evolves according to (\ref{eq:q-update}). The decision variable $x(t)$ is chosen every slot $t$
in reaction to the current $(s(t), a(t), \gamma(t))$ (and possibly additional queue state information) subject to
the constraint $0 \leq x(t) \leq x_{max}$ for all $t$, 
where $x_{max}$ is a finite upper bound.  We assume that $x_{max} \geq a_{max}$ so that it is always possible to 
stabilize the queue $Q(t)$ (and this can be done with one slot delay if we choose $x(t) = x_{max}$ for all $t$). 
Define $\overline{c}$ as the time average cost incurred by our control policy (assuming temporarily that our policy 
yields such a well defined limit): 
\begin{eqnarray*}
&\overline{c} \defequiv \lim_{t\rightarrow\infty} \frac{1}{t}\sum_{\tau=0}^{t-1} \expect{\gamma(t)x(t)}&
\end{eqnarray*}
We want to find an allocation algorithm that chooses $x(t)$ over time to solve: 
\begin{eqnarray}
\mbox{Minimize:} && \overline{c} \label{eq:problem1}  \\
\mbox{Subject to:} &1)&  \overline{Q} < \infty \label{eq:problem2} \\
&2)&  0 \leq x(t) \leq x_{max} \: \: \forall t \label{eq:problem3} 
\end{eqnarray}
where $\overline{Q}$ is the time average expected queue backlog, defined: 
\begin{eqnarray*}
&\overline{Q} \defequiv \limsup_{t\rightarrow\infty} \frac{1}{t}\sum_{\tau=0}^{t-1} \expect{Q(\tau)}&
\end{eqnarray*}
Define $c^*$ as the infimum time average cost associated with the above problem, considering all possible
ways of choosing $x(t)$ over time.   The value of $c^*$ is an ambitious target 
because the above problem is defined
only in terms of a queue stability constraint and does not impose any additional delay constraint.  We shall construct
a solution, parameterized by a constant $V>0$,  that satisfies the constraints of the above problem and pushes
the average cost within $O(1/V)$ of the optimal value 
$c^*$.  Further, we show that our algorithm has the 
additional property that worst case delay is no more than
$O(V)$.

\subsection{The Delay-Aware Virtual Queue} 

We solve the above problem while also maintaining finite worst case delay using the following novel
``virtual queue'' $Z(t)$:  Fix a parameter $\epsilon>0$, to be specified later.   
Define $Z(0)=0$, and define the virtual queue
$Z(t)$ for $t \in \{0, 1, 2, \ldots\}$ according to the following update: 
\begin{eqnarray} 
Z(t+1) = \max[Z(t) - s(t) - x(t) + \epsilon1_{\{Q(t)>0\}}, 0] \label{eq:z-update}
\end{eqnarray}  
where $1_{\{Q(t)>0\}}$ is an indicator function that is 1 if $Q(t)>0$, and zero else. 
The intuition is that $Z(t)$ has the same service process as $Q(t)$ (being $s(t) + x(t)$), but now has an arrival
process that adds $\epsilon$ whenever the actual queue backlog is non-empty.  This ensures that $Z(t)$ grows
if there are requests in the $Q(t)$ queue that have not been serviced for a long time. 
 If we can control the system to ensure that the queues $Q(t)$ and $Z(t)$ have finite upper bounds, then 
 we can ensure all requests are served with a worst case delay given in the 
following lemma.\footnote{In the case when requests are served by the outside source with an additional delay
$K>0$, then this bound is modified in the actual implementation to $\lceil(Q_{max} + Z_{max})/\epsilon\rceil + K$.} 

\begin{lem} \label{lem:z} (Worst Case Delay) 
Suppose the system is controlled so that $Z(t) \leq Z_{max}$ and $Q(t) \leq Q_{max}$ for all $t$, 
for some positive constants $Z_{max}$ and $Q_{max}$. Then all requests are fulfilled with a maximum delay of $D_{max}$
slots, where: 
\begin{equation}
D_{max} \defequiv \lceil(Q_{max} + Z_{max})/\epsilon\rceil  \label{eq:Dmax} 
\end{equation} 
\end{lem}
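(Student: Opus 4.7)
The plan is to argue by contradiction: assume some request that arrives in slot $t_0$ is still unserved at the end of slot $t_0 + D_{max}$, and then derive that the total service delivered over the window $[t_0+1, t_0+D_{max}]$ is simultaneously bounded above by $Q_{max}$ and below by $Q_{max}$, in a way that leaves no slack. The key tool is that the virtual queue $Z(t)$ is engineered so that, whenever $Q$ is nonempty, $Z$ accumulates deterministic $\epsilon$-arrivals, and its boundedness therefore forces the cumulative service $s(\tau)+x(\tau)$ to keep pace.

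First I would pin down the FIFO bookkeeping. The request in question joins the queue at the end of slot $t_0$ as part of $a(t_0)$, so the amount of work ahead of it at that instant is at most $Q(t_0+1) \le Q_{max}$. Hence, if the request were served within $D_{max}$ slots, it would suffice for the service delivered during slots $t_0+1,\ldots,t_0+D_{max}$ to total at least $Q_{max}$; conversely, if it is not served in that window, then by FIFO we must have $Q(\tau)>0$ for every $\tau \in \{t_0+1,\ldots,t_0+D_{max}\}$, since the request itself remains in the queue throughout.

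Next I would exploit the $Z(t)$ dynamics. Dropping the outer max in (\ref{eq:z-update}) gives the one-sided inequality
\begin{equation*}
Z(\tau+1) \;\ge\; Z(\tau) - s(\tau) - x(\tau) + \epsilon\, 1_{\{Q(\tau)>0\}}.
\end{equation*}
Summing this from $\tau=t_0+1$ to $\tau = t_0+D_{max}$, and using the fact that every indicator in the sum equals $1$ under our contradiction hypothesis, yields
\begin{equation*}
\sum_{\tau=t_0+1}^{t_0+D_{max}}\bigl[s(\tau)+x(\tau)\bigr] \;\ge\; \epsilon D_{max} + Z(t_0+1) - Z(t_0+D_{max}+1) \;\ge\; \epsilon D_{max} - Z_{max},
\end{equation*}
where I used $Z(t_0+1)\ge 0$ and $Z(t_0+D_{max}+1)\le Z_{max}$. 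The definition $D_{max} = \lceil (Q_{max}+Z_{max})/\epsilon\rceil$ gives $\epsilon D_{max} \ge Q_{max}+Z_{max}$, so the cumulative service is at least $Q_{max}$.

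Finally I would close the contradiction: the cumulative service over the window equals or exceeds $Q_{max}$, which is an upper bound on the amount of work sitting ahead of our request at the end of slot $t_0$; hence by the FIFO policy the request must have been fully served by the end of slot $t_0+D_{max}$, contradicting the assumption. The main subtlety — and the step I expect to need most care — is the FIFO/timing argument in the first paragraph: one has to be explicit that ``unserved at the end of slot $t_0+D_{max}$'' forces $Q(\tau)>0$ throughout the window (so that the $\epsilon$ indicator contributes on every slot), and that the relevant upper bound on the request's position in the queue at end of slot $t_0$ is genuinely $Q_{max}$ rather than something like $Q_{max}+a_{max}$.
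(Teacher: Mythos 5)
Your proof is correct and follows essentially the same argument as the paper's: contradiction via $Q(\tau)>0$ throughout the window, summing the one-sided $Z$-update to lower-bound the cumulative service by $\epsilon D_{max}-Z_{max}$, and the FIFO observation that $Q_{max}$ units of service over the window suffice to clear the request. The only cosmetic difference is where you land the contradiction (service $\ge Q_{max}$ implies the request was served, versus the paper's $D_{max}<(Q_{max}+Z_{max})/\epsilon$ contradicting the ceiling definition), which is the same inequality rearranged.
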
 
\begin{proof} 
Consider any slot $t$ for which $a(t)>0$.  We show that the requests $a(t)$ are fulfilled on or before
time $t + D_{max}$.  Suppose not (we shall reach a contradiction).  Then during 
slots $\tau \in \{t+1, \ldots, t+D_{max}\}$ it must be that $Q(\tau) > 0$ (else the requests $a(t)$ would have
been served before slot $\tau$).  Thus, $1_{\{Q(\tau)>0\}} = 1$, and
from (\ref{eq:z-update}) we have that for all $\tau \in \{t+1, \ldots, t+D_{max}\}$: 
\[ Z(\tau+1) \geq Z(\tau) - s(\tau) - x(\tau) + \epsilon \]
Summing the above over $\tau \in \{t+1, \ldots, t+D_{max}\}$ yields: 
\[ Z(t+D_{max}+1) - Z(t+1) \geq - \sum_{\tau=t+1}^{t+D_{max}} [s(\tau) + x(\tau)] + D_{max}\epsilon \]
Rearranging and using the fact that $Z(t+1) \geq 0$ and $Z(t+D_{max}+1) \leq Z_{max}$ yields: 
\begin{eqnarray} 
 &\sum_{\tau=t+1}^{t+D_{max}} [s(\tau) + x(\tau)] \geq D_{max}\epsilon - Z_{max}& \label{eq:lemz1} 
 \end{eqnarray} 
 Now note that the requests $a(t)$ are first available for service at time $t+1$, and are part of the 
 backlog $Q(t+1)$ (see (\ref{eq:q-update})).  Because $Q(t+1) \leq Q_{max}$ and because service is FIFO, 
 these requests $a(t)$
 are served on or before time $t+D_{max}$ whenever 
 there are at least $Q_{max}$ units of energy served
 during the interval $\tau \in \{t+1, \ldots, t+D_{max}\}$. Because we have assumed the requests $a(t)$ are \emph{not}
 served by time $t+D_{max}$, it must be that
 $\sum_{\tau=t+1}^{t+D_{max}} [s(\tau) + x(\tau)] < Q_{max}$. 
Using this in (\ref{eq:lemz1}) yields: 
\[ Q_{max} > D_{max} \epsilon - Z_{max} \]
This implies that $D_{max} < (Q_{max} + Z_{max})/\epsilon$, 
contradicting the definition of $D_{max}$ in (\ref{eq:Dmax}). 
\end{proof} 

\subsection{Lyapunov Optimization} 

Define $\bv{\Theta}(t) \defequiv (Z(t), Q(t))$ as the concatenated vector of the real and virtual queues. 
As a scalar measure of the congestion in both the $Z(t)$ and $Q(t)$ queues, we define the following
Lyapunov function: $L(\bv{\Theta}(t)) \defequiv \frac{1}{2}[Z(t)^2 + Q(t)^2]$. 
Define the conditional 1-slot Lyapunov drift as follows: 
\begin{equation} \label{eq:delta-def} 
\Delta(\bv{\Theta}(t)) \defequiv \expect{L(\bv{\Theta}(t+1)) - L(\bv{\Theta}(t))|\bv{\Theta}(t)} 
\end{equation} 
Following the drift-plus-penalty framework of \cite{now}\cite{neely-energy-it}\cite{neely-thesis}, our control
algorithm is designed to observe the current queue states $Z(t)$, $Q(t)$ and the current $(s(t), a(t), \gamma(t))$ vector, and 
to make a decision $x(t)$ (where $0 \leq x(t) \leq x_{max}$) to 
minimize a bound on the following expression every slot $t$: 
\[ \Delta(\bv{\Theta}(t)) + V\expect{\gamma(t)x(t)|\bv{\Theta}(t)} \]
where $V$ is a positive 
parameter that will be useful to affect a performance-delay tradeoff. 
We first compute a bound on the above drift-plus-penalty expression. 

\begin{lem} \label{lem:drift-bound} (Drift Bound) 
For any control policy that satisfies $0 \leq x(t) \leq x_{max}$ for all $t$,  
the drift-plus-penalty expression for all slots $t$ satisfies: 
\begin{eqnarray} 
&& \hspace{-.3in}\Delta(\bv{\Theta}(t)) + V\expect{\gamma(t)x(t)|\bv{\Theta}(t)} \leq  B +V\expect{\gamma(t)x(t)|\bv{\Theta}(t)} \nonumber \\
&& \hspace{+1in} + Q(t)\expect{a(t) - s(t) - x(t)|\bv{\Theta}(t)} \nonumber \\
&& \hspace{+1in} + Z(t)\expect{\epsilon - s(t) - x(t)|\bv{\Theta}(t)}  \label{eq:drift-bound} 
\end{eqnarray} 
where the constant $B$ is defined: 
\begin{equation} \label{eq:B} 
B \defequiv \frac{(s_{max}+x_{max})^2 + a_{max}^2}{2} + \frac{\max[\epsilon^2, (s_{max} + x_{max})^2]}{2} 
\end{equation} 
\end{lem}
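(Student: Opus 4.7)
The proof is a standard Lyapunov drift calculation: square both queue updates, bound the resulting quadratic residuals by a constant $B$, sum, take conditional expectations, and add the penalty. Let me sketch the steps.

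First, I would handle the real queue $Q(t)$. Using the fact that $\max[q-b,0] \leq q-b$ implies $(\max[q-b,0])^2 \leq (q-b)^2$, squaring \eqref{eq:q-update} and using $2Q(t)a(t) \geq 2\max[Q(t)-s(t)-x(t),0]a(t)$ (because the arrivals $a(t)$ appear after the max) gives the classical bound
\[
Q(t+1)^2 - Q(t)^2 \leq a(t)^2 + (s(t)+x(t))^2 + 2Q(t)\bigl(a(t) - s(t) - x(t)\bigr).
\]
Using the deterministic bounds in \eqref{eq:sample-path-bounds} together with $x(t)\leq x_{max}$, the first two terms are at most $a_{max}^2 + (s_{max}+x_{max})^2$, accounting for the first fraction in the definition of $B$.

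Next I would handle the virtual queue $Z(t)$. The update \eqref{eq:z-update} has no ``post-max'' arrival, so squaring directly yields
\[
Z(t+1)^2 - Z(t)^2 \leq \bigl(\epsilon 1_{\{Q(t)>0\}} - s(t) - x(t)\bigr)^2 + 2Z(t)\bigl(\epsilon 1_{\{Q(t)>0\}} - s(t) - x(t)\bigr).
\]
The potential subtlety here — and what I expect to be the only nontrivial step — is bounding the squared residual. When $Q(t)=0$ it equals $(s(t)+x(t))^2 \leq (s_{max}+x_{max})^2$; when $Q(t)>0$ it equals $(\epsilon - s(t)-x(t))^2$, and since both $\epsilon$ and $s(t)+x(t)$ are nonnegative, the elementary inequality $(a-b)^2 \leq \max(a^2,b^2)$ for $a,b \geq 0$ gives a bound of $\max[\epsilon^2, (s_{max}+x_{max})^2]$. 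Either way, this residual is at most $\max[\epsilon^2,(s_{max}+x_{max})^2]$, which matches the second fraction in the definition of $B$ after I divide by two.

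Finally I would sum the two bounds, divide by two to form $L(\bv{\Theta}(t+1)) - L(\bv{\Theta}(t))$, and take the conditional expectation given $\bv{\Theta}(t)$ — note $Q(t)$ and $Z(t)$ are $\bv{\Theta}(t)$-measurable so they come out of the expectation. To reach the stated form, I would use $Z(t) \geq 0$ and $1_{\{Q(t)>0\}} \leq 1$ to replace the term $Z(t)\,\epsilon 1_{\{Q(t)>0\}}$ by the looser upper bound $Z(t)\,\epsilon$, yielding the conditional expectation of $\epsilon - s(t) - x(t)$ against $Z(t)$ exactly as written in \eqref{eq:drift-bound}. Adding $V\expect{\gamma(t)x(t)\mid \bv{\Theta}(t)}$ to both sides completes the proof.
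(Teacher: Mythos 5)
Your proof is correct and follows essentially the same route as the paper's Appendix A: square both queue updates, bound the quadratic residual terms by the two halves of $B$ using the deterministic bounds (\ref{eq:sample-path-bounds}), sum to form the Lyapunov difference, take conditional expectations, and add the penalty; the only cosmetic difference is that the paper discards the indicator via $\max[Z(t)-s(t)-x(t)+\epsilon 1_{\{Q(t)>0\}},0]\leq\max[Z(t)-s(t)-x(t)+\epsilon,0]$ \emph{before} squaring, whereas you square first and then drop the indicator from the cross term using $Z(t)\geq 0$ --- both are valid and give the same constant. One trivial slip in your justification for the $Q$-queue step: the claim ``$\max[q-b,0]\leq q-b$'' is backwards (in fact $\max[q-b,0]\geq q-b$); the correct one-liner is $0\leq\max[q-b,0]\leq |q-b|$, which still yields $(\max[q-b,0])^2\leq (q-b)^2$, so your conclusion is unaffected.
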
 
\begin{proof} 
See Appendix A. 
\end{proof} 

\subsection{The Dynamic Algorithm} 

Minimizing the right-hand-side of the drift-plus-penalty bound (\ref{eq:drift-bound}) every slot $t$ leads to the 
following dynamic algorithm:  Every slot $t$, observe $Z(t)$, $Q(t)$, $(s(t), a(t), \gamma(t))$, and choose $x(t)$
according to the following optimization: 
\begin{eqnarray*} 
\mbox{Minimize:} & x(t)[V\gamma(t) - Q(t)- Z(t)]\\
\mbox{Subject to:} & 0 \leq x(t) \leq x_{max} 
\end{eqnarray*}
Then update the actual and virtual queues $Q(t)$ and $Z(t)$ by (\ref{eq:q-update}) and (\ref{eq:z-update}). 
The above minimization for the 
$x(t)$ decision reduces to the following simple threshold rule: 
\begin{equation} \label{eq:x-choice} 
x(t) = \left\{ \begin{array}{ll}
                          0  &\mbox{ if $Q(t) + Z(t) \leq V\gamma(t)$} \\
                             x_{max}  & \mbox{ otherwise} 
                            \end{array}
                                 \right. 
\end{equation} 

The above $x(t)$ value drives the queueing updates (\ref{eq:q-update}) and (\ref{eq:z-update}).  However, 
note by the $\max[\cdot,0]$ structure of the $Q(t)$ update in (\ref{eq:q-update}) that we may not need to purchase
the full $x(t)$ units of energy from the outside plant on slot $t$.  Indeed, define $\tilde{x}(t)$ as the \emph{actual} 
amount purchased from the plant, given by: 
\begin{equation} \label{eq:x-tilde} 
\tilde{x}(t) \defequiv  \left\{ \begin{array}{ll}
                          x(t)&\mbox{ if $Q(t) - s(t) \geq  x(t)$} \\
                             \min[Q(t) - s(t), 0]  & \mbox{ otherwise} 
                            \end{array}
                                 \right.
\end{equation} 
Then we have $\tilde{x}(t) \leq x(t)$ for all $t$.

\begin{thm} \label{thm:performance} (Performance Analysis) Suppose 
$x_{max} \geq \max[a_{max}, \epsilon]$.  
If $Q(0) = Z(0) = 0$, and if the above dynamic algorithm is implemented with any fixed $\epsilon\geq0$ and 
$V>0$ for all $t \in \{0, 1, 2, \ldots\}$, then: 

a) The queues $Q(t)$ and $Z(t)$ are deterministically bounded by $Q_{max}$ and $Z_{max}$ every slot $t$, where: 
\begin{eqnarray}
Q_{max} \defequiv  V\gamma_{max} + a_{max}  \: \: , \: \: 
Z_{max} \defequiv V\gamma_{max} + \epsilon \label{eq:Qmax} 
\end{eqnarray}

b) The worst case delay of any request is: 
\begin{equation} \label{eq:Dmax2} 
D_{max} = \left\lceil(2V\gamma_{max} + a_{max} + \epsilon)/\epsilon \right\rceil 
\end{equation} 

c) If the vector $(s(t), a(t), \gamma(t))$ is i.i.d. over slots, and if 
the $\epsilon$ parameter is chosen to satisfy $\epsilon \leq \max[\expect{a(t)}, \expect{s(t)}]$, 
then for all slots $t>0$ the time average cost satisfies: 
\begin{eqnarray*} 
 \frac{1}{t}\sum_{\tau=0}^{t-1} \expect{\gamma(\tau)\tilde{x}(\tau)} \leq 
\frac{1}{t}\sum_{\tau=0}^{t-1} \expect{\gamma(\tau)x(\tau)}  
 \leq c^* + B/V 
 \end{eqnarray*}
where $B$ is defined in (\ref{eq:B}). 
\end{thm}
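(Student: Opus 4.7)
}

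The plan is to prove the three parts sequentially, with (a) being a deterministic sample-path induction, (b) an immediate consequence of (a) combined with Lemma \ref{lem:z}, and (c) the classical drift-plus-penalty telescoping argument.

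For part (a), I would prove $Q(t)\le Q_{max}$ and $Z(t)\le Z_{max}$ by induction on $t$. The base case is immediate from $Q(0)=Z(0)=0$. For the induction step on $Q(t)$, I split into two cases. If $Q(t)\le V\gamma_{max}$, then the update (\ref{eq:q-update}) combined with $a(t)\le a_{max}$ yields $Q(t{+}1)\le V\gamma_{max}+a_{max}=Q_{max}$. If $Q(t)>V\gamma_{max}$, then $Q(t)+Z(t)>V\gamma_{max}\ge V\gamma(t)$, so the threshold rule (\ref{eq:x-choice}) forces $x(t)=x_{max}\ge a_{max}$; (\ref{eq:q-update}) then gives $Q(t{+}1)\le Q(t)+a(t)-x_{max}\le Q(t)\le Q_{max}$. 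An entirely parallel argument works for $Z(t)$, using $x_{max}\ge\epsilon$ and the $\epsilon$-increment in (\ref{eq:z-update}). Part (b) then follows by substituting $Q_{max}$ and $Z_{max}$ into (\ref{eq:Dmax}) of Lemma \ref{lem:z}.

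For part (c), I would use the drift bound of Lemma \ref{lem:drift-bound}. The algorithm is designed to minimize the right-hand side of (\ref{eq:drift-bound}) over feasible $x(t)$, so for \emph{any} alternative (possibly randomized) policy $x^*(t)$ with $0\le x^*(t)\le x_{max}$,
\begin{eqnarray*}
\Delta(\bv{\Theta}(t))+V\expect{\gamma(t)x(t)\mid\bv{\Theta}(t)} &\le& B+V\expect{\gamma(t)x^*(t)\mid\bv{\Theta}(t)}\\
&& +\,Q(t)\expect{a(t)-s(t)-x^*(t)\mid\bv{\Theta}(t)}\\
&& +\,Z(t)\expect{\epsilon-s(t)-x^*(t)\mid\bv{\Theta}(t)}.
\end{eqnarray*}
Next, invoking the standard existence result for i.i.d.\ problems, I would argue that for any $\delta>0$ there is a stationary randomized policy $x^*(t)$ that depends only on the current $(s(t),a(t),\gamma(t))$ realization (hence is independent of $\bv{\Theta}(t)$) and achieves $\expect{\gamma(t)x^*(t)}\le c^*+\delta$ together with $\expect{s(t)+x^*(t)}\ge\expect{a(t)}$ (queue stability). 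The hypothesis $\epsilon\le\max[\expect{a(t)},\expect{s(t)}]$ is the key ingredient that also delivers $\expect{s(t)+x^*(t)}\ge\epsilon$: either $\expect{s(t)}\ge\epsilon$ already, or $\expect{a(t)}\ge\epsilon$ and the stability inequality inherits the bound. Plugging this policy in, the conditional expectations collapse to unconditional ones, both queue-weighted terms become $\le 0$, and the drift bound simplifies to $\Delta(\bv{\Theta}(t))+V\expect{\gamma(t)x(t)\mid\bv{\Theta}(t)}\le B+V(c^*+\delta)$.

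The final step is a telescoping argument: take total expectation, sum over $\tau=0,\dots,t-1$, divide by $t$, use $\expect{L(\bv{\Theta}(0))}=0$ and $\expect{L(\bv{\Theta}(t))}\ge 0$, and let $\delta\downarrow 0$ to obtain $\frac{1}{t}\sum_{\tau=0}^{t-1}\expect{\gamma(\tau)x(\tau)}\le c^*+B/V$. The first inequality in the theorem is just $\tilde{x}(\tau)\le x(\tau)$ from (\ref{eq:x-tilde}). The main obstacle I anticipate is the justification of the reference policy $x^*(t)$ achieving $c^*$ while simultaneously stabilizing \emph{both} queues; the $\epsilon\le\max[\expect{a(t)},\expect{s(t)}]$ hypothesis is precisely calibrated to make this work, and care is needed to ensure the virtual-queue constraint does not shrink the feasible set below the infimum $c^*$ defined by (\ref{eq:problem1})--(\ref{eq:problem3}).
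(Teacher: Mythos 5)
Your proposal follows essentially the same route as the paper's own proof: the same two-case induction for part (a), the same invocation of Lemma \ref{lem:z} for part (b), and the same drift-plus-penalty comparison against a stationary $(s,a,\gamma)$-only reference policy followed by telescoping for part (c), including the correct use of $\epsilon\le\max[\expect{a(t)},\expect{s(t)}]$ to kill the $Z(t)$ term. The only (harmless) difference is that you work with a $\delta$-suboptimal stationary policy and let $\delta\downarrow 0$, whereas the paper cites Lemma \ref{lem:optimality} to assert a stationary policy achieving $c^*$ exactly.
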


The above theorem demonstrates the $[O(1/V), O(V)]$ cost-delay tradeoff, where time average cost
is within $B/V$ of the minimum possible time average cost $c^*$ required for queue stability, and worst
case delay is proportional to $V/\epsilon$.  To obtain the smallest $D_{max}$, the 
$\epsilon$ value should be chosen as large
as possible while still maintaining $\epsilon \leq \max[\expect{a(t)}, \expect{s(t)}]$.  
We can choose $\epsilon = \expect{a(t)}$ if this
expectation is known.  Using $\epsilon =0$ preserves parts (a) and (c) but does not give a finite $D_{max}$. 
More discussion of the $\epsilon =0$ case is given in Section \ref{section:exp}. 

\subsection{Proof of Theorem \ref{thm:performance}}

\begin{proof} (Theorem \ref{thm:performance} part (a)) 
We first show that $Q(t) \leq V\gamma_{max} + a_{max}$ for all $t$. This is clearly true for $t=0$ (because $Q(0)=0$). 
Suppose it holds for slot $t$.  We show it also holds for slot $t+1$.  
Consider the case when $Q(t) \leq V\gamma_{max}$.  Then $Q(t+1) \leq V\gamma_{max} + a_{max}$,  because 
the queue can increase by at most $a_{max}$ on any slot (see dynamics (\ref{eq:q-update})). Thus, the result
holds in this case. 

Now consider the opposite case when  $V\gamma_{max} < Q(t) \leq V\gamma_{max} + a_{max}$.  In this case, 
we have: 
\[ Q(t) + Z(t) \geq Q(t) > V\gamma_{max} \geq V\gamma(t) \]
and hence the algorithm will choose $x(t) = x_{max}$ according to (\ref{eq:x-choice}).  If $Q(t) - x_{max} - s(t) > 0$, then
on slot $t$ we serve at least $x_{max}$ units of data.  Because arrivals $a(t)$ are at most $a_{max}$ (and $a_{max} \leq x_{max}$), the queue cannot increase on the next slot and so $Q(t+1) \leq Q(t) \leq V\gamma_{max} + a_{max}$. 
Finally, if $Q(t) - x_{max} - s(t) \leq 0$, then by (\ref{eq:q-update}) we have
$Q(t+1) = a(t) \leq a_{max}$, again being less than or equal to 
$V\gamma_{max} + a_{max}$.

Therefore, $Q(t) \leq V\gamma_{max} + a_{max}$ for all $t$. 
The proof that $Z(t) \leq V\gamma_{max} + \epsilon$ for all $t$ is similar and  omitted for brevity. 
\end{proof} 

\begin{proof} (Theorem \ref{thm:performance} part (b)) 
This follows immediately from Lemma \ref{lem:z} together with part (a). 
\end{proof}

The proof of Theorem \ref{thm:performance} part (c) requires a preliminary lemma
from  \cite{neely-energy-it}. 
To introduce the lemma, define a \emph{$(s,a,\gamma)$-only policy} to be one  
 that observes the current vector $(s(t), a(t), \gamma(t))$
and makes a stationary and randomized decision $x^*(t)$ based purely on this vector (and independent of the queue
backlogs or past system history), subject to the constraint $0 \leq x^*(t) \leq x_{max}$.   

\begin{lem} \label{lem:optimality} (Characterizing Optimality \cite{neely-energy-it}) If the vector
$(s(t), a(t), \gamma(t))$ is i.i.d. over
slots, then there 
exists a $(s,a,\gamma)$-only policy $x^*(t)$ that satisfies: 
\begin{eqnarray} 
\expect{\gamma(t)x^*(t)} &=& c^* \label{eq:x1opt} \\
\expect{s(t) + x^*(t)} &\geq& \expect{a(t)}  \label{eq:x2opt} 
\end{eqnarray} 
where $c^*$ is the infimum time average cost in the stochastic optimization problem (\ref{eq:problem1})-(\ref{eq:problem3}), and 
the above expectations are with respect to the stationary distribution of the vector
 $(s(t), a(t), \gamma(t))$ and the possibly randomized action $x^*(t)$ made in reaction to this vector.
\end{lem}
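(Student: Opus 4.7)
The plan is to recast the stochastic problem (\ref{eq:problem1})--(\ref{eq:problem3}) as a static optimization over distributions of $x^*$ conditional on the current realization $(s,a,\gamma)$, and then to show that the infimum $c^*$ is attained within this restricted $(s,a,\gamma)$-only class. This sidesteps the difficulty of directly searching over all history-dependent policies.

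First I would establish the rate-stability condition (\ref{eq:x2opt}). Iterating the queue update (\ref{eq:q-update}) forward from $Q(0)$ yields $Q(t) \geq Q(0) + \sum_{\tau=0}^{t-1}[a(\tau) - s(\tau) - x(\tau)]$. Taking expectations and dividing by $t$, the hypothesis $\overline{Q} < \infty$ forces $\expect{Q(t)}/t \to 0$ along at least a subsequence (otherwise the running time-average of $\expect{Q(\tau)}$ would diverge), from which $\expect{a(t)} \leq \liminf_{t\to\infty} \frac{1}{t}\sum_{\tau=0}^{t-1}\expect{s(\tau) + x(\tau)}$ for any feasible policy. Second, I would introduce the set $\mathcal{R}\subset\mathbb{R}^2$ of pairs $(\expect{\gamma(t) x^*(t)}, \expect{x^*(t)})$ achievable by $(s,a,\gamma)$-only policies; since $(s,a,\gamma)$ is i.i.d.\ and bounded and $x^*\in[0,x_{max}]$, $\mathcal{R}$ is convex (timesharing two policies via an independent coin) and closed (dominated convergence along weakly convergent sequences of the conditional law of $x^*$), hence compact. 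A Caratheodory/disintegration argument then shows that the long-run pair achieved by any feasible, possibly history-dependent policy also lies in $\mathcal{R}$: conditioning on $(s,a,\gamma)=\omega$, the long-run empirical distribution of $x(\tau)$ over slots where $(s(\tau),a(\tau),\gamma(\tau))=\omega$ induces a randomized $(s,a,\gamma)$-only policy reproducing the same expected cost and service rate.

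Combining these pieces, $c^* = \inf\{c : (c,r)\in\mathcal{R},\; r \geq \expect{a(t)}\}$, and by compactness of $\mathcal{R}$ this infimum is attained by some pair $(c^*, r^*)$ with $r^* \geq \expect{a(t)}$; the corresponding policy is the desired $x^*(t)$. I expect the main obstacle to be the Caratheodory/disintegration step: rigorously turning a time-average achievement of an arbitrary, possibly history-dependent policy into an equal-in-expectation $(s,a,\gamma)$-only policy requires measurable selection and careful ergodic averaging of the action distributions conditional on the i.i.d.\ exogenous state. This reduction is precisely what the framework of \cite{neely-energy-it} is designed to do, and I would invoke that machinery directly rather than re-derive it from scratch.
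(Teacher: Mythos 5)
Your proposal is correct and matches the paper's approach: the paper's entire proof of this lemma is a one-line citation to \cite{neely-energy-it}, and your sketch (rate-stability forcing $\expect{s(t)+x(t)}\geq\expect{a(t)}$ in time average, convexity and compactness of the achievable cost--rate region for stationary randomized policies, and the disintegration argument reducing arbitrary history-dependent policies to $(s,a,\gamma)$-only ones) is precisely the machinery of that reference, which you also explicitly invoke for the hard reduction step. The only nit is notational: with $r=\expect{x^*(t)}$ the stability constraint should read $\expect{s(t)}+r\geq\expect{a(t)}$ rather than $r\geq\expect{a(t)}$.
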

\begin{proof} (Lemma \ref{lem:optimality})
This follows as a special case of results in \cite{neely-energy-it}. 
\end{proof} 

\begin{proof} (Theorem \ref{thm:performance} part (c)) 
We have assumed that $\epsilon \leq \max[\expect{a(t)}, \expect{s(t)}]$.  We first prove the result
for the case when $\epsilon \leq \expect{a(t)}$.  
On every slot $t$, 
the dynamic choice of $x(t)$ in (\ref{eq:x-choice}) minimizes the right-hand-side of the drift bound (\ref{eq:drift-bound}) 
(given the observed queue sizes $\bv{\Theta}(t) \defequiv (Q(t), Z(t))$), over all alternative choices $x^*(t)$ 
that satisfy the required bounds $0 \leq x^*(t) \leq x_{max}$ (including
randomized choices for $x^*(t)$). Thus, by (\ref{eq:drift-bound}) we have: 
\begin{eqnarray*}
&& \hspace{-.3in} \Delta(\bv{\Theta}(t)) + V\expect{\gamma(t)x(t)|\bv{\Theta}(t)} \leq B  + V\expect{\gamma(t)x^*(t)|\bv{\Theta}(t)} \\
&&\hspace{+.8in} + Q(t)\expect{a(t) - s(t) - x^*(t)|\bv{\Theta}(t)} \\
&&\hspace{+.8in} + Z(t)\expect{\epsilon - s(t) - x^*(t)|\bv{\Theta}(t)} 
\end{eqnarray*} 
where $x^*(t)$ is any alternative (possibly randomized) decision. 
Plugging the $(s, a, \gamma)$-only policy $x^*(t)$ from (\ref{eq:x1opt})-(\ref{eq:x2opt}) (known to exist by Lemma \ref{lem:optimality}) into the right hand side of the above inequality for slot $t$, and noting that this policy makes
decisions independent of queue backlogs, yields: 
\begin{eqnarray}
&& \hspace{-.8in} \Delta(\bv{\Theta}(t)) + V\expect{\gamma(t)x(t)|\bv{\Theta}(t)} \leq B  + Vc^* \label{eq:drift-almost-done} 
\end{eqnarray} 
where we have used the fact that: 
\begin{eqnarray} 
&&\hspace{-.3in} \expect{a(t) - s(t) - x^*(t) | \bv{\Theta}(t)} \nonumber \\
&&= \expect{a(t) - s(t) - x^*(t)} \leq 0 \label{eq:proofc1} \\
&&\hspace{-.3in}  \expect{\epsilon - s(t) - x^*(t)|\bv{\Theta}(t)} \nonumber \\
&& = \expect{\epsilon - s(t) - x^*(t)} \leq 0 \label{eq:proofc2} 
\end{eqnarray} 
where (\ref{eq:proofc1}) follows from (\ref{eq:x2opt}) and the fact that the $(s, a, \gamma)$-only policy $x^*(t)$ 
is i.i.d. over slots and hence independent 
of queue backlogs $\bv{\Theta}(t)$, and (\ref{eq:proofc2}) follows from (\ref{eq:x2opt}) together
with  the fact that $\expect{a(t)} \geq \epsilon$.

Taking expectations of (\ref{eq:drift-almost-done}) 
and using the law of iterated expectations with the definition of $\Delta(\bv{\Theta}(t))$ in (\ref{eq:delta-def}) yields: 
\[ \expect{L(\bv{\Theta}(t+1))} - \expect{L(\bv{\Theta}(t))} + V\expect{\gamma(t)x(t)}  \leq B + Vc^* \]
The above holds for all slots $t>0$.  Summing over $t \in \{0, 1, \ldots, M-1\}$ for some positive integer $M$ yields: 
\begin{eqnarray*}
 \expect{L(\bv{\Theta}(M))} - \expect{L(\bv{\Theta}(0))} + \sum_{t=0}^{M-1}V\expect{\gamma(t)x(t)}  \leq \\
 BM + VMc^*
 \end{eqnarray*}
Using the fact that $L(\bv{\Theta}(0)) = 0$ (because all queues are initially empty), and that $L(\bv{\Theta}(M)) \geq 0$
(because the Lyapunov function is non-negative) and dividing by $VM$ yields: 
\begin{eqnarray*}
&\frac{1}{M}\sum_{t=0}^{M-1} \expect{\gamma(t)x(t)} \leq c^* + B/V& 
\end{eqnarray*}
This holds for all $M>0$, proving the result for the case when $\epsilon \leq \expect{a(t)}$. 

We have only used the assumption that $\epsilon \leq \expect{a(t)}$ to ensure the inequality 
(\ref{eq:proofc2}) holds.  If $\epsilon \leq \expect{s(t)}$, then clearly (\ref{eq:proofc2}) 
holds, regardless of the value of $\expect{a(t)}$. 
Thus, the result holds whenever $\epsilon \leq \max[\expect{a(t)}, \expect{s(t)}]$, proving the theorem. 
\end{proof}

\section{Pricing for Maximum Profit} \label{section:pricing} 

We now extend the problem to consider  pricing decisions.   Instead of a process $a(t)$ that represents requests arriving
at slot $t$, we define a process $y(t)$, called the \emph{demand state} on slot $t$.  The demand state captures any
properties of the demand that may affect requests for the renewable energy source in reaction to the price advertised
on slot $t$.  A simple example is when $y(t)$ can take one of two possible values, such as HIGH and LOW, representing
different demand conditions (such as during peak times or non-peak times for requesting energy).  Another
example is when $y(t)$ represents the number of consumers willing to purchase renewable energy on slot $t$.  
We assume
the demand state $y(t)$ is known at the beginning of each slot $t$ (we show a particular case where $y(t)$
does not need to be known after our algorithm is stated). 

Every slot $t$, in addition to choosing the amount of energy $x(t)$ purchased from outside sources, the renewable
energy plant makes a binary decision $b(t) \in \{0, 1\}$, where $b(t) = 1$ represents 
a willingness to accept new requests on slot $t$, and $b(t)=0$ means no requests will be accepted.  If $b(t) = 1$
is chosen, the plant 
also chooses a per-unit-energy price $p(t)$ within an interval $0 \leq p(t) \leq p_{max}$, where $p_{max}$
is a pre-established maximum price.  The arriving requests $a(t)$ are then influenced by the current price $p(t)$, the
current market price $\gamma(t)$, and the current demand state $y(t)$, according to a general \emph{demand function} 
$F(p, y, \gamma)$.  Specifically, the values of $a(t)$ are assumed to be conditionally i.i.d. over all slots with the 
same $p(t)$, $y(t)$, $\gamma(t)$, and satisfy: 
\[ \expect{a(t) | p(t), y(t), \gamma(t), b(t)=1} = F(p(t), y(t), \gamma(t)) \] 
We assume the function $F(p,y,\gamma)$ is continuous in $p$ for each given $y$ and $\gamma$.\footnote{This continuity
is only used to ensure the  resulting min-drift decision has a well defined minimizing price $p(t)$ every slot.}
We further assume the arrivals $a(t)$ continue to be worst-case bounded by $a_{max}$, regardless of $p(t)$, $y(t)$, 
$\gamma(t)$.  The queue iteration $Q(t)$ still operates according to (\ref{eq:q-update}), with the understanding that 
 $a(t)$ is now influenced
by the pricing decisions.  Let $\phi(t)$ represent the instantaneous profit earned on slot $t$, defined as: 
\[ \phi(t) = b(t)p(t)a(t) - \gamma(t)x(t) \]
   We now consider the following problem: 
\begin{eqnarray}
\mbox{Maximize:} && \overline{\phi} \label{eq:newopt1}  \\
\mbox{Subject to:} &1)&  \overline{Q} < \infty \label{eq:newopt2} \\
&2)& 0 \leq x(t) \leq x_{max} \forall t \label{eq:newopt3} \\
&3)& b(t) \in \{0,1\} \:  ,  \: 0 \leq p(t) \leq p_{max} \forall t \label{eq:newopt4} 
\end{eqnarray}
where $\overline{\phi}$ is defined as the limiting time average profit: 
\begin{eqnarray*}
&\overline{\phi} \defequiv \lim_{t\rightarrow\infty} \frac{1}{t}\sum_{\tau=0}^{t-1} \expect{\phi(\tau)}&
\end{eqnarray*}

To solve the problem, we use the same queueing structure for $Q(t)$ in (\ref{eq:q-update}) and the same
virtual queue structure for $Z(t)$ in (\ref{eq:z-update}), and use the same Lyapunov function $L(\bv{\Theta}(t))$ as
defined before (recall that $\bv{\Theta}(t)$ is defined as the vector $(Q(t), Z(t))$).  However, we now consider
the ``penalty'' $-\phi(t)$, and so the drift-plus-penalty technique seeks to choose a vector that minimizes a bound on: 
\[ \Delta(\bv{\Theta}(t))  -V\expect{\phi(t)|\bv{\Theta}(t)} \]
Using the same analysis as Lemma \ref{lem:drift-bound}, we can show the following bound on this drift-plus-penalty
expression: 
\begin{eqnarray}
\Delta(\bv{\Theta}(t)) - V\expect{\phi(t)|\bv{\Theta}(t)} \leq B  \nonumber \\
- V\expect{b(t)p(t)F(p(t), y(t), \gamma(t)) - \gamma(t)x(t)|\bv{\Theta}(t)} \nonumber \\
+ Q(t)\expect{b(t)F(p(t), y(t), \gamma(t)) - s(t) - x(t)|\bv{\Theta}(t)} \nonumber \\
+ Z(t)\expect{\epsilon - s(t) - x(t)|\bv{\Theta}(t)} \label{eq:price-drift}  
\end{eqnarray}

Our joint energy-allocation and pricing algorithm observes the current system state on each slot $t$, and 
chooses $b(t)$, $p(t)$, and $x(t)$ to minimize the right-hand side of the above drift expression (given the observed
$\bv{\Theta}(t)$).  This reduces to the following: Every slot $t$, observe queues $Q(t)$, $Z(t)$, and 
observe $s(t)$, $\gamma(t)$, $y(t)$.  Then choose a price $p(t)$ and an allocation $x(t)$ as follows: 

\begin{itemize} 
\item (Pricing $p(t)$)  Choose $p(t)$ as the solution to: 
\begin{eqnarray*}
\mbox{Max:} & F(p(t), y(t), \gamma(t))(Vp(t)
 - Q(t))\\
\mbox{S.t.:} & 0 \leq p(t) \leq p_{max} 
\end{eqnarray*}
If the resulting maximum value is non-negative, choose $b(t)=1$.  Else choose $b(t)=0$ so that no
new requests are allowed on slot $t$.
\item (Allocating $x(t)$) Choose $x(t)$ according to 
(\ref{eq:x-choice}). 
\item (Queue Updates) Update $Q(t)$ and $Z(t)$ by (\ref{eq:q-update}) and (\ref{eq:z-update}). 
\end{itemize} 

This pricing pricing policy does not need to know the demand state $y(t)$ in the special case when
$F(p(t), y(t), \gamma(t)) = y(t)\hat{F}(p(t), \gamma(t))$, so that demand state simply scales
the demand function.  This pricing  structure is similar to that
considered in \cite{longbo-two-price-ton} for 
wireless service providers. 

\subsection{Defining Optimality}  \label{section:price-optimality} 

We define a \emph{$(s, y, \gamma)$-only policy} as one that jointly chooses $x^*(t)$, $b^*(t)$, $p^*(t)$
subject to $0 \leq x^*(t) \leq x_{max}$, $b^*(t)\in\{0,1\}$, $0 \leq p^*(t) \leq p_{max}$ according to a stationary and randomized
decision that depends only on $s(t)$, $y(t)$, $\gamma(t)$.  As in \cite{neely-energy-it}, it can be shown that
the supremum time average profit $\phi^*$ associated with the problem (\ref{eq:newopt1})-(\ref{eq:newopt4}) 
can be achieved over the class of $(s, y, \gamma)$-only policies. Thus, there exists a $(s, y, \gamma)$-only policy 
$x^*(t)$, $b^*(t)$, $p^*(t)$ that satisfies: 
\begin{eqnarray}
\expect{b^*(t)p^*(t)a^*(t) - \gamma(t)x^*(t)} = \phi^*  \label{eq:optimalitynew1} \\
\expect{a^*(t) - s(t) - x^*(t)} \leq 0 \label{eq:optimalitynew2} 
\end{eqnarray}
where $a^*(t)$ represents the random requests on slot $t$ associated with pricing decisions $b^*(t)$, $p^*(t)$
and under the random demand state $y(t)$ and the random market price $\gamma(t)$. It is useful to define
$a^* \defequiv \expect{a^*(t)}$.  In the case when the policy $p^*(t)$, $b^*(t)$, $x^*(t)$ that satisfies (\ref{eq:optimalitynew1})-(\ref{eq:optimalitynew2}) is not unique, we define $a^*$ as the maximum value such that there exists
an $(s, y, \gamma)$-only policy that satisfies (\ref{eq:optimalitynew1})-(\ref{eq:optimalitynew2}). 

\subsection{The Joint Pricing and Allocation Algorithm} 

\begin{thm} \label{thm:2} Assume that $x_{max} \geq \max[a_{max}, \epsilon]$,
and that $Q(0) = Z(0) = 0$.  If the above joint pricing and allocation policy is implemented every slot 
with fixed parameters $\epsilon\geq 0$, $V>0$, then: 

a) The worst case delay $D_{max}$ and backlog $Q_{max}$  are 
the same as before (given in (\ref{eq:Dmax2}), (\ref{eq:Qmax})), where $Q_{max}$ is proportional to 
$V$ and $D_{max}$ is 
proportional to $V/\epsilon$. 

b) If the vector $(s(t), y(t), \gamma(t))$ is i.i.d. over slots, and if 
$\epsilon \leq \max[a^*, \expect{s(t)}]$ (where $a^*$ is defined in Section \ref{section:price-optimality}), 
then:\footnote{Note that \emph{actual profit} can be defined
$\tilde{\phi}(t) \defequiv b(t)p(t)a(t) - \gamma(t)\tilde{x}(t)$, with $\tilde{x}(t)$ defined in (\ref{eq:x-tilde}). 
Clearly  $\tilde{\phi}(t) \geq \phi(t)$ for all $t$, and so the time average of the actual profit $\tilde{\phi}(t)$ 
is even closer to
the optimal  value $\phi^*$.} 
\begin{eqnarray*}
& \frac{1}{t}\sum_{\tau=0}^{t-1} \expect{\phi(\tau)} \geq \phi^* - B/V&  \: \: \forall t > 0
\end{eqnarray*} 
where $B$ is defined in (\ref{eq:B}), and $\phi^*$ is the optimal time average profit that can be achieved
by any algorithm that satisfies the constraints of the problem (\ref{eq:newopt1})-(\ref{eq:newopt4}). 
\end{thm}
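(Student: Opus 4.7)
The plan is to mirror the two-part structure of Theorem \ref{thm:performance} almost verbatim, since the pricing modification only changes \emph{what determines} $a(t)$, not the worst-case bounds on $a(t)$, and the $x(t)$ rule and queue updates are unchanged.

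For part (a), I would observe that the allocation rule (\ref{eq:x-choice}) is identical to the one used in Theorem \ref{thm:performance}, the queue dynamics (\ref{eq:q-update}) and (\ref{eq:z-update}) are unchanged, and the arrivals still satisfy $0 \leq a(t) \leq a_{max}$ regardless of the pricing choice $(b(t),p(t))$. Consequently, the inductive argument from Theorem \ref{thm:performance}(a) applies without modification to conclude $Q(t) \leq V\gamma_{max} + a_{max}$ and $Z(t) \leq V\gamma_{max} + \epsilon$ for all $t$. Once these deterministic bounds are in hand, the worst-case delay bound (\ref{eq:Dmax2}) follows immediately from Lemma \ref{lem:z}.

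For part (b), I would apply the drift-plus-penalty framework to the penalty $-\phi(t)$, starting from the bound (\ref{eq:price-drift}). Because the proposed joint pricing and allocation rule is designed to minimize the right-hand side of (\ref{eq:price-drift}) over all feasible $(b(t),p(t),x(t))$ for each observed $\bv{\Theta}(t)$, any alternative feasible decision can be substituted on the right-hand side to obtain a valid upper bound on the drift-plus-penalty. I would substitute the $(s,y,\gamma)$-only policy $(b^*(t),p^*(t),x^*(t))$ guaranteed by Section \ref{section:price-optimality}, using (\ref{eq:optimalitynew1}) to replace the penalty term with $-V\phi^*$ and using (\ref{eq:optimalitynew2}) together with the independence of $(s,y,\gamma)$-only decisions from $\bv{\Theta}(t)$ to show that the $Q(t)\expect{\cdot}$ term is non-positive. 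The $Z(t)\expect{\epsilon - s(t) - x^*(t)}$ term is handled by the assumption $\epsilon \leq \max[a^*,\expect{s(t)}]$: if $\epsilon \leq \expect{s(t)}$ the bound is immediate, and if $\epsilon \leq a^*$ it follows by combining (\ref{eq:optimalitynew2}) with the definition $a^* = \expect{a^*(t)}$, paralleling the two cases treated at the end of the proof of Theorem \ref{thm:performance}(c).

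After these substitutions I would have $\Delta(\bv{\Theta}(t)) - V\expect{\phi(t)|\bv{\Theta}(t)} \leq B - V\phi^*$. Taking iterated expectations, telescoping over $t \in \{0,1,\ldots,M-1\}$, using $L(\bv{\Theta}(0))=0$ and $L(\bv{\Theta}(M)) \geq 0$, and dividing by $VM$ yields the claimed bound $\frac{1}{M}\sum_{t=0}^{M-1}\expect{\phi(t)} \geq \phi^* - B/V$. The main subtlety, rather than an obstacle, is the non-uniqueness caveat in the definition of $a^*$ in Section \ref{section:price-optimality}: I need to invoke the $(s,y,\gamma)$-only policy that attains the \emph{maximum} such $\expect{a^*(t)}$ among optimal policies, so that the assumption $\epsilon \leq a^*$ delivers $\epsilon - \expect{s(t) + x^*(t)} \leq 0$ for this specific comparison policy; everything else is a routine re-run of the Theorem \ref{thm:performance}(c) argument.
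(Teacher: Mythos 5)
Your proposal is correct and follows essentially the same route as the paper's Appendix C: part (a) by noting the queue dynamics and the bound $a(t)\leq a_{max}$ are unchanged so the sample-path argument of Theorem \ref{thm:performance}(a)--(b) carries over, and part (b) by substituting the $(s,y,\gamma)$-only policy into the drift bound (\ref{eq:price-drift}), splitting into the cases $\epsilon\leq a^*$ and $\epsilon\leq\expect{s(t)}$, and telescoping. Your remark about invoking the optimal comparison policy attaining the maximal $a^*$ is exactly the role that definition plays in the paper's argument.
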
 

\begin{proof} 
See  Appendix C. 
\end{proof} 

\section{Non-I.I.D. Models} \label{section:non-iid}

Here we extend the analysis to treat non-i.i.d. models.  For brevity, we consider only the problem of Section 
\ref{section:model} that seeks to 
allocate $x(t)$  without regard to pricing.\footnote{Similar analysis can be applied to the pricing problem for this non-i.i.d. 
case, using the technique in \cite{neely-inventory-control-arxiv} that incorporates the random demand
$a(t)$ with expectation $F(p(t), y(t), \gamma(t))$, where the $y(t)$ and $\gamma(t)$ processes are arbitrary sample
paths.} 
Specifically, we assume that the processes $s(t)$, $a(t)$, $\gamma(t)$
vary randomly over slots according to any probability 
model (with arbitrary time correlations).   
However, we continue to assume the sample paths are
bounded so that $0 \leq s(t) \leq s_{max}$, $0 \leq a(t) \leq a_{max}$, $0 \leq \gamma(t) \leq \gamma_{max}$ for all $t$. 
We show that the same algorithm of Section \ref{section:model}, which allocates $x(t)$ according to (\ref{eq:x-choice}), 
still provides efficient performance in this context.  We assume that $Q(0) = Z(0) = 0$, and that fixed parameters
$V>0$ and $\epsilon\geq0$ are used. We continue to assume that $x_{max} \geq \max[a_{max}, \epsilon]$. 

We first observe that the exact same worst case backlog and delay bounds $Q_{max}$ and  
$D_{max}$ given in (\ref{eq:Qmax}) and (\ref{eq:Dmax2}) hold in this non-i.i.d. 
case.  Thus,  worst case delay is still bounded by a constant that is proportional to $V/\epsilon$.  This is because the
proof of this bound in Theorem \ref{thm:performance} (a) and (b) was a \emph{sample path proof} that did
not make use of the i.i.d. assumptions.  Indeed, it used only the fact that $0 \leq a(t) \leq a_{max}$ for all $t$. 

It remains only to understand the efficiency of the time average cost.  To this end, we use the $T$-slot lookahead
metric as defined in the universal scheduling work \cite{neely-universal-scheduling}\cite{neely-stock-arxiv}. 
Specifically, suppose that the sample path of $(s(t), a(t), \gamma(t))$ is chosen at time $0$ for all $t$ according to some
arbitrary values.  For a given positive integer $T$ and a positive integer $R$, we consider the first $RT$ slots, 
composed of $R$ successive ``frames'' of size $T$. For each frame $r \in \{0, 1, \ldots, R-1\}$, we define $c_r^*$ as the 
optimum solution to the following ``ideal'' problem that uses full knowledge of $(s(t), a(t), \gamma(t))$ over the frame: 
\begin{eqnarray}
& \mbox{Minimize:}  \: \: \:  c_r^* \defequiv \frac{1}{T}\sum_{\tau=rT}^{(r+1)T-1} \gamma(t)x(t) \label{eq:universal1} \\
& \mbox{Subject to:} \: \: 1)  \sum_{\tau=rT}^{(r+1)T-1} [s(\tau) + x(\tau) - a(\tau)] \geq 0 \label{eq:universal2} \\
& \: \: 2)  \sum_{\tau=rT}^{(r+1)T-1} [s(\tau) + x(\tau) - \epsilon] \geq 0  \label{eq:universal3} \\
& \: \: \: \: \: \: 3) 0 \leq x(\tau) \leq x_{max} 
  \forall \tau \in \{rT, \ldots, (r+1)T-1\} \label{eq:universal4} 
\end{eqnarray}
Thus, $c_r^*$ is the optimal cost that can be achieved over frame $r$, considering all possible ways of allocating
$x(\tau)$ over this frame using perfect knowledge of the future values of $(s(\tau), a(\tau), \gamma(\tau))$ over this
frame, subject to ensuring the total energy provided over the frame is at least as much as the total sum arrivals, 
and  is also at least $\epsilon T$. 

\begin{thm} \label{thm:universal}  (Universal Scheduling) 
Under the above assumptions, the worst case backlog and delay are given by $Q_{max}$ and 
$D_{max}$ in (\ref{eq:Qmax}) and (\ref{eq:Dmax2}). Further, 
for all positive integers $T$ and $R$, we have: 
\begin{eqnarray*} 
&\frac{1}{RT} \sum_{\tau=0}^{RT-1} \gamma(\tau)x(\tau) \leq \frac{1}{R}\sum_{r=0}^{R-1} c_r^* + \frac{BT}{V} &
\end{eqnarray*}
where $B$ is defined in (\ref{eq:B}).
\end{thm}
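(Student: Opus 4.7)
The plan is as follows. First, I would observe that the worst-case backlog and delay bounds $Q_{max}$, $Z_{max}$, and $D_{max}$ transfer verbatim from Theorem \ref{thm:performance}(a)(b) and Lemma \ref{lem:z}: those arguments were pure sample-path inductions that used only the deterministic envelopes $0 \le s(t) \le s_{max}$, $0 \le a(t) \le a_{max}$, $0 \le \gamma(t) \le \gamma_{max}$ and the algorithm's threshold rule (\ref{eq:x-choice}), never the i.i.d. distribution. So all new work concerns the cost bound.

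For the cost bound I would first establish a sample-path form of Lemma \ref{lem:drift-bound}: working on the raw updates (\ref{eq:q-update})--(\ref{eq:z-update}) without taking expectations, the same squaring argument yields
\[
L(\bv{\Theta}(t+1)) - L(\bv{\Theta}(t)) + V\gamma(t)x(t) \le B + V\gamma(t)x'(t) + Q(t)[a(t)-s(t)-x'(t)] + Z(t)[\epsilon - s(t) - x'(t)]
\]
for every feasible alternative $x'(t) \in [0, x_{max}]$. This inequality is legal because our algorithm (\ref{eq:x-choice}) was defined precisely to minimize the right-hand side over $x(t)$, so substituting the algorithm's $x(t)$ on the left and \emph{any} $x'(t)$ on the right preserves the direction.

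Next I would fix a frame $r \in \{0,\ldots,R-1\}$ and choose $x'(\tau) = x_r^*(\tau)$ to be the frame-$r$ lookahead optimum defined by (\ref{eq:universal1})--(\ref{eq:universal4}), then sum the inequality over $\tau \in \{rT, \ldots, (r+1)T-1\}$. The Lyapunov terms telescope, the penalty sum collapses to $VTc_r^*$, and what remains are the residuals $\sum_\tau Q(\tau)[a(\tau)-s(\tau)-x_r^*(\tau)]$ and $\sum_\tau Z(\tau)[\epsilon - s(\tau) - x_r^*(\tau)]$. The key manoeuvre is an Abel-style split $Q(\tau) = Q(rT) + (Q(\tau)-Q(rT))$: the $Q(rT)$ piece multiplies $\sum_\tau[a(\tau)-s(\tau)-x_r^*(\tau)]$, which is $\le 0$ by constraint (\ref{eq:universal2}), so it contributes non-positively and disappears; the remaining piece is controlled by the deterministic Lipschitz bound $|Q(\tau)-Q(rT)| \le T\max(a_{max}, s_{max}+x_{max})$, yielding an $O(T^2)$ frame-level residual that is \emph{independent of $V$}. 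The analogous manipulation handles $Z$ using (\ref{eq:universal3}) and $|Z(\tau)-Z(rT)| \le T\max(\epsilon, s_{max}+x_{max})$.

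Summing over the $R$ frames, using $L(\bv{\Theta}(0)) = 0$ and $L(\bv{\Theta}(RT)) \ge 0$, and dividing by $VRT$ then gives $\frac{1}{RT}\sum_\tau \gamma(\tau)x(\tau) \le \frac{1}{R}\sum_r c_r^* + BT/V$ after the per-frame $O(T^2)$ residuals amortize into an $O(T/V)$ term (absorbed into the constant $B$). The main obstacle will be the Abel-decomposition step: one cannot simply pull $Q(\tau)$ or $Z(\tau)$ outside the frame sum because they depend on the algorithm's own past decisions, so the proof must simultaneously exploit the frame-level lookahead constraints (to kill the $Q(rT)$ and $Z(rT)$ contributions) and the per-slot Lipschitz drift (to keep the residual from scaling with $V$). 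Without both ingredients, the naive bound $Q(\tau) \le Q_{max} = O(V)$ would produce an error that cancels the $1/V$ gain of the drift-plus-penalty method.
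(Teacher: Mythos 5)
Your proposal follows essentially the same route as the paper's Appendix~B: an expectation-free one-slot drift bound, substitution of the frame-$r$ lookahead optimum via the per-slot minimization property of (\ref{eq:x-choice}), the split $Q(\tau)=Q(rT)+(Q(\tau)-Q(rT))$ (and likewise for $Z$) so that the frame constraints (\ref{eq:universal2})--(\ref{eq:universal3}) eliminate the leading terms while one-slot Lipschitz bounds control the remainder, and a telescoping sum over frames, with the backlog/delay bounds carried over unchanged from the sample-path proof of Theorem~\ref{thm:performance}(a),(b). The only quibble is quantitative: to land on the stated constant $B$ (rather than roughly $2B$) you need the per-slot accounting $|Q(\tau)-Q(rT)|\le C_Q(\tau-rT)$ summed to $C_Q^2T(T-1)/2$, as the paper does, rather than the uniform bound $|Q(\tau)-Q(rT)|\le C_Q T$.
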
 
\begin{proof} 
The proof combines the techniques of the proof of Theorem \ref{thm:performance} with the universal 
scheduling results in \cite{neely-universal-scheduling}\cite{neely-stock-arxiv}, and is given in Appendix B. 
\end{proof} 

The above result says that the achieved time average cost over any interval of $RT$ slots is less than or equal to the 
average of the $c_r^*$ values, plus a ``fudge factor'' of at most $BT/V$. While the average of the $c_r^*$ 
values is \emph{not} the same as the minimum cost that could be achieved with perfect knowledge of the future
over the full $RT$ slots, this result is still interesting because the $c_r^*$ values are still obtained by ideal algorithms
implemented over $T$ slot frames with full knowledge of the future events in these frames.

\section{Experimental Evaluation}
\label{section:exp}

We evaluated the performance of the proposed algorithm on a six-month data set that we created by combining 
$10$-minute average spot market prices $\gamma(t)$ 
for Los Angeles area (LA1) from CAISO~\cite{oasis}
and $10$-minute energy production $s(t)$ 
for a small subset of windfarms from the Western Wind resources 
Dataset published by the National Renewable Energy Laboratory~\cite{NREL}. 
We modeled the demand $a(t)$ as i.i.d. over slots and uniformly distributed 
over the integers $\{0, 1, \ldots, a_{max}\}$. 
We executed the proposed Lyapunov drift optimization algorithm in $10$-minute timeslots and experimented with different values of the parameters $V,\epsilon$ and the corresponding deadlines they generate.

We compare the proposed algorithm against a simple greedy strategy ``Purchase at deadline,'' 
which tries to use all the available resource $s(t)$ and only buys from the spot market as a last resort if a deadline is reached. 
As can be seen in Fig. \ref{fg_comp1}, the proposed algorithm reduces the cost of the renewable supplier by approximately a factor of $2$ in the tested six-month window. The slope of the two lines is different, suggesting that the savings are unbounded as the time increases. This is not surprising since the greedy strategy does not hedge for future high prices in the spot market while the proposed algorithm learns to proactively buy when the spot market prices are lower than typical and deadline violations seem probable. The high 
variability of the spot market prices~\cite{oasis} makes this advantage significant. 
The second observation, seen in Fig. \ref{fg_comp2}, is that the proposed algorithm has on average a 
much smaller delay than the deadline, which for our parameters was $D_{max}=70$ hours. 
On the contrary, the greedy algorithm makes many requests wait close to (or exactly at) 
the maximum 
allowed $70$ hours.

Our results use $\epsilon = \expect{a(t)} = a_{max}/2$.  We also conducted simulations
with $\epsilon=0$,  which does not require knowledge of $\expect{a(t)}$.  
While $\epsilon =0$ does not provide a finite delay guarantee, 
it still guarantees the same finite $Q_{max}$. Together with FIFO
service, this means that the worst case 
delay for requests that arrive at time $t$ is given by the smallest integer $T>0$ such that 
$\sum_{\tau=t+1}^{t+T} s(\tau) \geq Q_{max}$. While there is no bound on this for general $s(t)$ processes, 
it can still lead to small
delays.  Indeed,  in the simulations 
it \emph{still} maintained all
delays under $D_{max} = 2.9$ days (having a maximum experimental delay of 14 hours, as compared
to 9.5 hours for the $\epsilon = \expect{a(t)}$ case).\footnote{For legibility, the delay data for the $\epsilon=0$ case is not
shown in Fig. \ref{fg_comp2}.} 
Fig. \ref{fg_comp1} shows it gives slightly better cost, particularly because it increases delay.
Both Lyapunov optimization  algorithms provided 
significantly better cost and delay as compared to the greedy algorithm. 
It should be noted that we did not compare against dynamic programming algorithms such
as the one proposed in \cite{pap3}. While it is clear that a dynamic programming approach could solve this problem optimally if the statistics of the underlying processes were known, one benefit of our approach is that no such prior knowledge is required. Further, the Lyapunov approach yields an efficient algorithm for multiple queues corresponding to different customers with different deadlines.

\begin{figure}[t]
\begin{center}
\includegraphics[width=80mm]{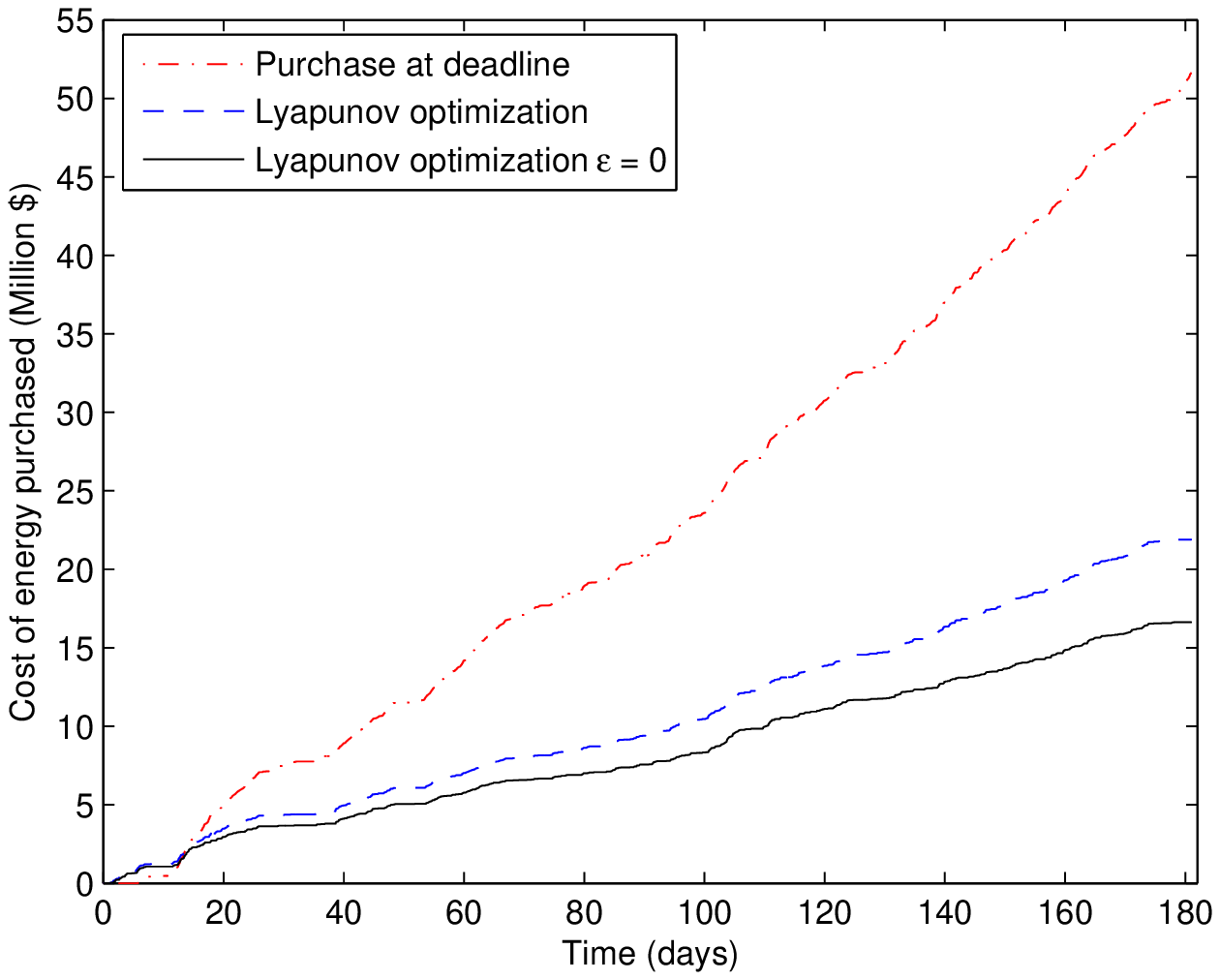}
\vspace{-.5cm}
\caption{Cost of the renewable energy supplier for energy purchased at the spot market. 
For the proposed algorithm we used the parameters $a_{max}=175, \gamma_{max} = 180, 
x_{max}=400, V=100, D_{max}=415=2.9$ days. 
}
\label{fg_comp1}
\includegraphics[width=80mm]{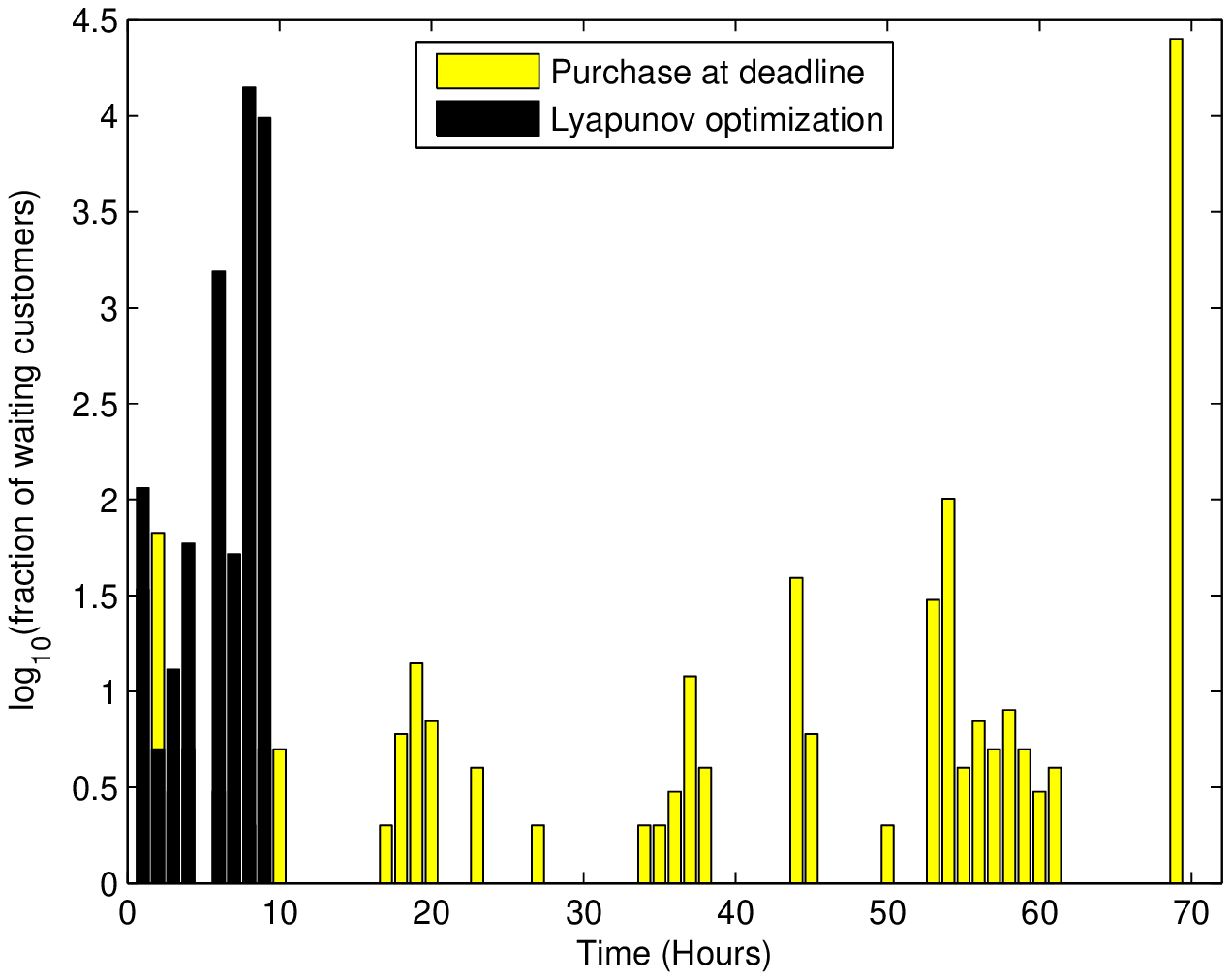}
\vspace{-.5cm}
\caption{Histogram of delay for  the customers waiting in the service queues of the renewable energy supplier under the two algorithms (vertical axis in logarithmic scale). Case $\epsilon=0$ is not shown, but has max delay $14$ hours, 
as compared with the $\epsilon=\expect{A(t)}$ case (shown) with max delay 9.5 hours. $\vspace{-.3in}$}
\label{fg_comp2}
\end{center}
\vspace{-0.2cm}
\end{figure}

We now present some further experimental results investigating the influence of varying $V$ 
and $\epsilon$ in the performance of the proposed algorithm. 
For these simulations we used the same data set as the previous part. For the first experiment, the performance of the algorithm for different values of parameter $V$ is compared. The rest of the parameters are
unchanged and are $a_{max} = 175$, $\gamma_{max} = 180$, 
$x_{max} = 400$, $s_{max} = 90$, and $\epsilon = 87.5$. The result is shown in Fig. \ref{compV}. As expected, 
the cost decreases with $V$.  The tradeoff is in the 
maximum waiting time of the packets.  The maximum waiting times observed in the simulations 
for parameter $V$ being $20,50,100,200$ are $3.5,5.8,10.2,15.2$ hours, respectively.\footnote{The maximum observed
waiting time for the simulation run for the $V=100$ case of Fig. \ref{compV}  
was 10.2, rather than 9.5 as in the previous simulation
for the case $V=100$.  This is because this simulation
used independently generated $a(t)$ values.} 
\begin{figure}[t]
\begin{center}
\includegraphics[width=80mm]{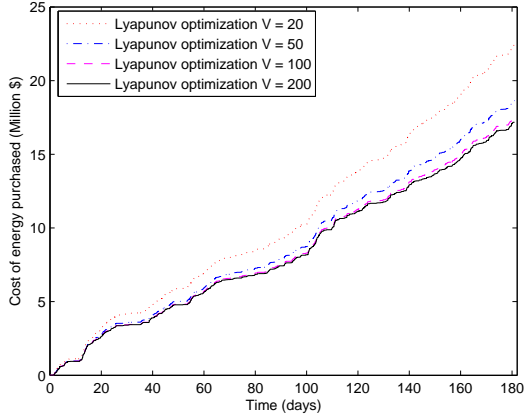}
\vspace{-.5cm}
\caption{Cost of the renewable energy supplier for energy purchased at the spot market for different values of $V = 20,50,100,200$. 
For the proposed algorithm we used the parameters $a_{max}=175$, $x_{max}=400$, and $\epsilon = 87.5$.
}
\label{compV}
\end{center}
\end{figure}
For the second experiment, we consider 
the performance of our algorithm for different values of $\epsilon$. Here, we fixed the value $V = 100$ and run the simulation for $\epsilon = \{87.5,60,35,10\}$. The cost decreases as $\epsilon$ decreases, as shown in Fig. \ref{compEps}. However, the maximum observed waiting times increase with $\epsilon$. 
So for $\epsilon = 87.5,60,35,10$, the maximum observed
waiting times are $9.5,11.7,12.5,13.7$ hours, respectively.
Overall, as expected, the cost gets better as $V$ is increased, with a tradeoff in 
waiting time. Further, the waiting time reduces as $\epsilon$ increases 
to $\expect{A(t)}$, although waiting times are still reasonable even with $\epsilon=0$, which is useful when 
$\expect{A(t)}$ is unknown. For non-i.i.d. situations, 
using a smaller value of $\epsilon$ may also reduce cost due to the fact that this relaxes the constraint 
(\ref{eq:universal3}). 

\begin{figure}[t]
\begin{center}
\includegraphics[width=80mm]{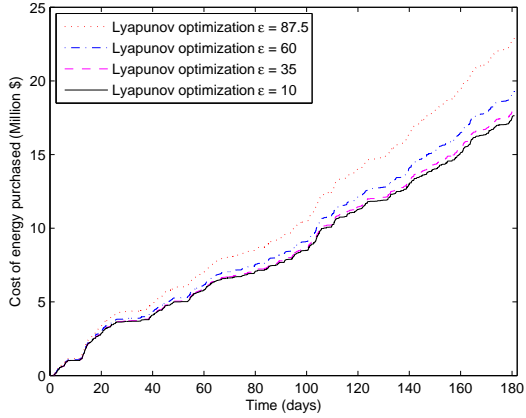}
\vspace{-.5cm}
\caption{Cost of the renewable energy supplier for energy purchased at the spot market for different values of $\epsilon = 87.5,60,35,10$. 
For the proposed algorithm we used the parameters $a_{max}=175, x_{max}=400$, and $V = 100$.
}
\label{compEps}
\end{center}
\end{figure}

\section{Conclusions} 

This work presents a Lyapunov optimization approach to the problem of efficient use of renewable
energy sources.  Efficiency can be improved if consumers are flexible and can tolerate their requests being
served with some delay.   
Two different problems were presented:  One that seeks to minimize cost associated with 
using an outside (possibly non-renewable) plant to meet the deadlines, and another that seeks to maximize
profit by dynamically selecting a price for service.  Our algorithms are simple and were shown to operate 
efficiently without knowing the statistical properties of the supply, demand, and energy request processes. 
We first considered a simple case when these processes are i.i.d. over slots but with unknown
probabilities.  We next treated the 
general case of arbitrary (possibly non-i.i.d. and non-ergodic) sample paths. 
Our analysis also contributes to the theory of Lyapunov optimization by introducing a new type of virtual 
queue that guarantees a bounded
worst case delay.  Our algorithms use a parameter $V$ that can be tuned as desired
to affect a performance-delay tradeoff, where achieved cost is within $O(1/V)$ from optimal, with a worst case
delay guarantee that is $O(V)$.   These techniques provide a convenient alternative to dynamic programming
that leads to a general framework for problems that naturally arise in scheduling of renewable energy markets. 
%

\section*{Appendix A -- Proof of Lemma \ref{lem:drift-bound}} 

From the $Z(t)$ update rule (\ref{eq:z-update}) we have: 
\[ Z(t+1) \leq \max[Z(t) - s(t) - x(t) + \epsilon, 0] \]
and hence: 
\[ Z(t+1)^2 \leq (Z(t) - s(t) - x(t) + \epsilon)^2 \]
Thus: 
\begin{eqnarray*}
\frac{Z(t+1)^2 - Z(t)^2}{2} \leq \\
\frac{1}{2}(\epsilon - s(t) - x(t))^2 + Z(t)(\epsilon - s(t) - x(t)) \\
\leq \frac{1}{2}\max[(s_{max} + x_{max})^2, \epsilon^2] + Z(t)(\epsilon - s(t) -x(t)) 
\end{eqnarray*}
Similarly, by squaring (\ref{eq:q-update}) and using the inequality: 
\[ (\max[Q - \mu, 0] + a)^2 \leq Q^2 + \mu^2 + a^2 + 2Q(a-\mu) \]
which holds for any $Q\geq 0$, $\mu\geq 0$, $a\geq 0$, 
we obtain: 
\begin{eqnarray}
\frac{Q(t+1)^2 - Q(t)^2}{2} \leq \frac{1}{2}[(s_{max} + x_{max})^2 + a_{max}^2] \nonumber \\
+ Q(t)(a(t) - s(t) - x(t)) \label{eq:q-one-slot} 
\end{eqnarray}
Combining the above yields: 
\begin{eqnarray}
&& \hspace{-.5in} L(\bv{\Theta}(t+1)) - L(\bv{\Theta}(t)) \leq B \nonumber \\
&&  \hspace{-.4in}+ Q(t)(a(t) - s(t) - x(t))  
+ Z(t)(\epsilon - s(t) - x(t)) \label{eq:drift-step} 
\end{eqnarray}
Taking conditional expectations of the above, given $\bv{\Theta}(t)$, and adding 
$V\expect{\gamma(t)x(t)|\bv{\Theta}(t)}$ to both sides proves the result.

\section*{Appendix B -- Proof of Theorem \ref{thm:universal}}

 Again define $\bv{\Theta}(t) \defequiv [Q(t), Z(t)]$, and  define the Lyapunov function $L(\bv{\Theta}(t))$ the same
 as before: 
 \[ L(\bv{\Theta}(t)) \defequiv \frac{1}{2}[Q(t)^2 + Z(t)^2] \]
As in \cite{neely-stock-arxiv}\cite{neely-universal-scheduling}, for a given integer $T>0$, 
we define the \emph{$T$-slot sample
path drift} $\Delta_T(\bv{\Theta}(t))$ as follows: 
\[ \Delta_T(\bv{\Theta}(t)) \defequiv L(\bv{\Theta}(t+T)) - L(\bv{\Theta}(t)) \]
This differs from our 1-slot conditional drift $\Delta(\bv{\Theta}(t))$, used for the i.i.d. analysis, because
(i) It involves $T$ slots, rather than 1 slot, and (ii) It does not use an expectation.   

Now suppose that the values $(a(\tau), s(\tau), \gamma(\tau))$ and $x(\tau)$ satisfy the following for all $\tau$:  
\begin{eqnarray}
0 \leq a(\tau) \leq a_{max} \: \: , \: \:  0 \leq s(\tau) \leq s_{max} \label{eq:bounds1} \\
  0 \leq \gamma(\tau) \leq \gamma_{max}  \: \: , \: \: 0 \leq x(\tau) \leq x_{max} \label{eq:bounds2} 
\end{eqnarray}
We have the following 
lemma. 

\begin{lem} \label{lem:T-slot-drift} 
Fix any slot $t$, any queue state $\bv{\Theta}(t) = [Q(t), Z(t)]$, and 
any integer $T>0$.  Consider an arbitrary sample path for 
$a(\tau), s(\tau), \gamma(\tau)$, over the interval $\tau \in \{t, t+1, \ldots, t+T-1\}$, 
assumed only to satisfy (\ref{eq:bounds1})-(\ref{eq:bounds2}). 
Assume that 
the decisions for $x(\tau)$ are given by the algorithm (\ref{eq:x-choice}), with queue
updates for $Q(t)$ and $Z(t)$ given by (\ref{eq:q-update}) and (\ref{eq:z-update}). 
Then: 
\begin{eqnarray*}
\Delta_T(\bv{\Theta}(t)) + V\sum_{\tau=t}^{t+T-1} \gamma(\tau)x(\tau) \leq \\
BT^2 
+ V\sum_{\tau=t}^{t+T-1} \gamma(\tau) x^*(\tau) \\
+ Q(t)\sum_{\tau=t}^{t+T-1} [a(\tau) - s(\tau) - x^*(\tau)] \\
+ Z(t)\sum_{\tau=t}^{t-1}[\epsilon - s(\tau) - x^*(\tau)]
\end{eqnarray*}
where $x^*(\tau)$ are any alternative choices that satisfy $0 \leq x^*(\tau)\leq x_{max}$
for all $\tau \in \{t, \ldots, t+T-1\}$.  The constant $B$ is given in (\ref{eq:B}).
\end{lem}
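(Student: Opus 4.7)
The plan is to promote the single-slot sample path drift inequality derived in Appendix A to a $T$-slot statement, then exploit the min-drift property of the algorithm at each individual slot, and finally clean up the resulting expression by replacing the running queue values $Q(\tau), Z(\tau)$ with their initial values $Q(t), Z(t)$ (which is what appears on the right-hand side of the claim).

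First, I would observe that inequality (\ref{eq:drift-step}) from Appendix A is a pure sample path bound (no expectations were used to obtain it): for every slot $\tau$,
\[ L(\bv{\Theta}(\tau+1)) - L(\bv{\Theta}(\tau)) \leq B + Q(\tau)(a(\tau) - s(\tau) - x(\tau)) + Z(\tau)(\epsilon - s(\tau) - x(\tau)). \]
Adding $V\gamma(\tau) x(\tau)$ to both sides and using the fact that the allocation rule (\ref{eq:x-choice}) minimizes $x(\tau)[V\gamma(\tau) - Q(\tau) - Z(\tau)]$ over $0 \leq x(\tau) \leq x_{max}$, I can replace $x(\tau)$ on the right-hand side by any alternative feasible $x^*(\tau)$, yielding a per-slot inequality of the drift-plus-penalty form but with the alternative control on the right.

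Second, I would sum this per-slot inequality over $\tau \in \{t, t+1, \ldots, t+T-1\}$. The left side telescopes into $\Delta_T(\bv{\Theta}(t)) + V\sum_\tau \gamma(\tau) x(\tau)$, and I pick up a summed constant of $BT$. The right side, however, still carries the running queue values $Q(\tau), Z(\tau)$, whereas the claim uses the fixed initial values $Q(t), Z(t)$. To bridge this gap, I would use the sample path bounds $|Q(\tau) - Q(t)| \leq (\tau - t)\max[s_{max}+x_{max}, a_{max}]$ and $|Z(\tau) - Z(t)| \leq (\tau - t)\max[s_{max}+x_{max}, \epsilon]$ that follow immediately from the one-step changes in (\ref{eq:q-update}) and (\ref{eq:z-update}). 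Because the multiplying factors $a(\tau) - s(\tau) - x^*(\tau)$ and $\epsilon - s(\tau) - x^*(\tau)$ are themselves bounded by absolute constants depending only on $a_{max}, s_{max}, x_{max}, \epsilon$, the total error from swapping $Q(\tau), Z(\tau)$ for $Q(t), Z(t)$ telescopes into $\sum_{\tau=t}^{t+T-1} (\tau-t) \cdot O(1) = O(T^2)$, which together with the $BT$ from the drift constants is absorbed into the single $BT^2$ term (enlarging the symbolic constant $B$ as needed, matching the convention used in the cited universal scheduling works \cite{neely-universal-scheduling, neely-stock-arxiv}).

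The main obstacle is purely bookkeeping: making sure the error incurred by substituting $Q(t), Z(t)$ for the running $Q(\tau), Z(\tau)$ is genuinely bounded by a constant multiple of $T^2$ and fits inside the $BT^2$ slack. There is no conceptual difficulty beyond what was already developed in Appendix A and in the universal scheduling framework; the only ingredients beyond the one-slot drift identity are (i) the slotwise minimizing property of (\ref{eq:x-choice}), which holds regardless of whether the queues happen to equal $Q(t), Z(t)$ or not, and (ii) the bounded one-step queue changes ensured by (\ref{eq:sample-path-bounds}) and $x(\tau) \leq x_{max}$. No probabilistic assumptions about $(s(\tau), a(\tau), \gamma(\tau))$ enter, which is exactly why the lemma applies in the non-i.i.d. setting of Section \ref{section:non-iid}.
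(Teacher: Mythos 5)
Your proposal is correct and follows essentially the same route as the paper's Appendix~B proof: sum the sample-path one-slot bound (\ref{eq:drift-step}), invoke the slotwise minimizing property of (\ref{eq:x-choice}) to substitute $x^*(\tau)$, and control the substitution of $Q(t),Z(t)$ for $Q(\tau),Z(\tau)$ via the bounded one-step queue changes, accumulating an $O(T^2)$ error. The only difference is that you hedge by ``enlarging the symbolic constant $B$,'' whereas the paper verifies directly that $(C_Q^2+C_Z^2)/2 \leq B$ for the specific $B$ in (\ref{eq:B}), so that $BT + \frac{(C_Q^2+C_Z^2)}{2}T(T-1) \leq BT^2$ holds with no enlargement---a one-line check you should include since the lemma asserts the bound with that exact constant.
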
 
\begin{proof} 
From (\ref{eq:drift-step})  we have that for all $\tau$: 
\begin{eqnarray*}
L(\bv{\Theta}(\tau+1)) - L(\bv{\Theta}(\tau)) \leq B + Q(\tau)(a(\tau) - s(\tau) - x(\tau)) \\
+ Z(\tau)(\epsilon - s(\tau) - x(\tau)) 
\end{eqnarray*}
Summing the result over $\tau\in\{t, \ldots, t+T-1\}$ yields: 
\begin{eqnarray*}
\Delta_T(\bv{\Theta}(t))  \leq BT+ \sum_{\tau=t}^{t+T-1} Q(\tau)(a(\tau) - s(\tau) - x(\tau)) \\
+ \sum_{\tau=t}^{t+T-1}Z(\tau)(\epsilon - s(\tau) -x(\tau)) 
\end{eqnarray*}
Adding the penalty term to both sides yields: 
\begin{eqnarray*}
\Delta_T(\bv{\Theta}(t)) + V\sum_{\tau=t}^{t+T-1}\gamma(\tau)x(\tau)  \leq  BT +  V\sum_{\tau=t}^{t+T-1}\gamma(\tau)x(\tau)\\
+  \sum_{\tau=t}^{t+T-1} Q(\tau)(a(\tau) - s(\tau) - x(\tau)) \\
+ \sum_{\tau=t}^{t+T-1}Z(\tau)(\epsilon - s(\tau) -x(\tau)) 
\end{eqnarray*}
We now use the fact that for each slot $\tau$, the value of $x(\tau)$ is chosen to minimize: 
\[ x(\tau)[V\gamma(\tau) - Q(\tau) - Z(\tau)] \]
over all $x(\tau)$ such that $0 \leq x(\tau) \leq x_{max}$.  It follows that: 
\begin{eqnarray*}
\Delta_T(\bv{\Theta}(t)) + V\sum_{\tau=t}^{t+T-1}\gamma(\tau)x(\tau)  \leq  BT +  V\sum_{\tau=t}^{t+T-1}\gamma(\tau)x^*(\tau)\\
 \sum_{\tau=t}^{t+T-1} Q(\tau)(a(\tau) - s(\tau) - x^*(\tau)) \\
+ \sum_{\tau=t}^{t+T-1}Z(\tau)(\epsilon - s(\tau) -x^*(\tau)) 
\end{eqnarray*}
where for all $\tau \in \{t, \ldots, t+T-1\}$, $x^*(\tau)$ is any value that satisfies $0 \leq x^*(\tau) \leq x_{max}$. 
Now note that the maximum changes in the $Q(\tau)$ and $Z(\tau)$ queues on one slot are given by constants
$C_Q$ and $C_Z$, respectively, defined: 
\begin{eqnarray*}
C_Q &\defequiv& \max[s_{max} + x_{max}, a_{max}] \\
C_Z &\defequiv& \max[s_{max} + x_{max}, \epsilon] 
\end{eqnarray*}
Thus: 
\begin{eqnarray*}
|(Q(\tau) - Q(t))(a(\tau) - s(\tau) -x^*(\tau))|  \leq C_Q^2(\tau-t) \\
|(Z(\tau) - Z(t))(\epsilon - s(\tau) - x^*(\tau)| \leq C_Z^2(\tau-t)  
\end{eqnarray*}
We can thus replace the right hand side of the above drift inequality with: 
\begin{eqnarray*}
\Delta_T(\bv{\Theta}(t)) + V\sum_{\tau=t}^{t+T-1}\gamma(\tau)x(\tau)  \leq  \\
BT +  \frac{(C_Q^2 + C_Z^2)T(T-1)}{2} \\
+ V\sum_{\tau=t}^{t+T-1}\gamma(\tau)x^*(\tau) 
+  Q(t) \sum_{\tau=t}^{t+T-1} (a(\tau) - s(\tau) - x^*(\tau)) \\
+ Z(t)\sum_{\tau=t}^{t+T-1}(\epsilon - s(\tau) -x^*(\tau)) 
\end{eqnarray*}
where we have used the fact that $\sum_{\tau=t}^{t+T-1} (\tau - t) = T(T-1)/2$. 
However, it is not difficult to show that:
\[ \frac{(C_Q^2 + C_Z^2)}{2} \leq B \]
and hence: 
\[ BT + \frac{(C_Q^2 + C_Z^2)}{2}T(T-1) \leq BT^2 \]
This proves the result. 
\end{proof} 

Now fix a frame size $T>0$, consider the timeline decomposed into $R$ successive frames
of size $T$, and consider any frame $r \in \{0, 1, \ldots, R-1\}$.  Define $c_r^*$ as the optimum 
cost in the frame-$r$ problem (\ref{eq:universal1})-(\ref{eq:universal4}), and define $x^*(\tau)$
for $\tau \in \{rT, \ldots, rT+T-1\}$ as the optimal decisions for that problem, which achieve 
$c_r^*$ and satisfy the inequality constraints (\ref{eq:universal2})-(\ref{eq:universal4}). 
Then using the drift bound given in Lemma \ref{lem:T-slot-drift} together with the equalities
and inequalities (\ref{eq:universal1})-(\ref{eq:universal4}), we have: 
\begin{eqnarray*}
\Delta_T(\bv{\Theta}(rT)) + V\sum_{\tau=rT}^{rT + T-1} \gamma(\tau)x(\tau) \leq
BT^2  + VTc_r^*
\end{eqnarray*}
Summing the above over $r \in \{0, 1, \ldots, R-1\}$, using the definition of $\Delta_T(\bv{\Theta}(t))$,
and dividing by $RTV$ yields: 
\begin{eqnarray*}
\frac{L(\bv{\Theta}(RT)) - L(\bv{\Theta}(0))}{RTV} + \frac{1}{RT}\sum_{\tau=0}^{RT-1}\gamma(\tau)x(\tau) \leq  \\
 \frac{BT}{V}
+ \frac{1}{R}\sum_{r=0}^{R-1}c_r^*
\end{eqnarray*}
Using the fact that $L(\bv{\Theta}(0)) = 0$ and $L(\bv{\Theta}(RT)) \geq 0$ yields the result.

\section*{Appendix C -- Proof of Theorem \ref{thm:2}} 

Part (a) follows by noting that the proof of parts (a) and (b) in Theorem \ref{thm:performance} hold exactly 
in this new context, as we have not changed the queueing dynamics for $Q(t)$ or $Z(t)$ or  
the fact that $a(t) \leq a_{max}$ for all $t$. 

We now prove part (b).  We have assumed that $\epsilon \leq \max[a^*, \expect{s(t)}]$. 
We first prove the result for the case $\epsilon \leq a^*$. 
On 
each slot $t$ our dynamic algorithm makes actions $b(t)$, $p(t)$, $x(t)$ that, given the 
observed $\bv{\Theta}(t) = [Q(t), Z(t)]$, 
minimizes the right hand side of the drift inequality 
(\ref{eq:price-drift}) over all alternative choices.  Thus:
\begin{eqnarray}
\Delta(\bv{\Theta}(t)) - V\expect{\phi(t)|\bv{\Theta}(t)} \leq B  \nonumber \\
- V\expect{b^*(t)p(t)F(p^*(t), y(t), \gamma(t)) - \gamma(t)x^*(t)|\bv{\Theta}(t)} \nonumber \\
+ Q(t)\expect{b^*(t)F(p^*(t), y(t), \gamma(t)) - s(t) - x^*(t)|\bv{\Theta}(t)} \nonumber \\
+ Z(t)\expect{\epsilon - s(t) - x^*(t)|\bv{\Theta}(t)} \label{eq:price-drift2}  
\end{eqnarray}
where $b^*(t)$, $p^*(t)$, $x^*(t)$ are any other choices that satisfy: 
\[ 0 \leq x^*(t) \leq x_{max} \: , \: 0 \leq p^*(t) \leq p_{max} \: , \: b^*(t) \in \{0,1\} \: \: \forall t \]
We now use the existence of a $(s,y,\gamma)$-only policy $x^*(t)$, $b^*(t)$, $p^*(t)$ that 
satisfies the inequalities (\ref{eq:optimalitynew1})-(\ref{eq:optimalitynew2}).  It is not 
difficult to show that (\ref{eq:optimalitynew1})-(\ref{eq:optimalitynew2}) are equivalent to the following: 
\begin{eqnarray}
\expect{b^*(t)p^*(t)F(p^*(t), y(t), \gamma(t)) - \gamma(t)x^*(t)|\bv{\Theta}(t)} = \phi^* \label{eq:onew1} \\
\expect{b^*(t)F(p^*(t), y(t),\gamma(t)) - s(t) - x^*(t)|\bv{\Theta}(t)} \leq 0 \label{eq:onew2} \\
\expect{b^*(t)F(p^*(t),y(t),\gamma(t))|\bv{\Theta}(t)} = a^* \label{eq:onew3} 
\end{eqnarray}
where the above conditional expectations (\ref{eq:onew1})-(\ref{eq:onew3}) given $\bv{\Theta}(t)$
are the same as the unconditional expectations, because the $(s,y,\gamma)$-only policy does
not depend on the queue states $\bv{\Theta}(t)$ (recall that $(s(t), y(t), \gamma(t))$ is i.i.d. over slots
and hence independent of queue states).  Plugging (\ref{eq:onew1})-(\ref{eq:onew3}) 
directly into the right hand side of (\ref{eq:price-drift2}) yields: 
\begin{eqnarray}
\Delta(\bv{\Theta}(t)) - V\expect{\phi(t)|\bv{\Theta}(t)} \leq B - V\phi^* 
+ Z(t)(\epsilon - a^*) \label{eq:appcfoo1} 
\end{eqnarray}
Because we have assumed that $\epsilon \leq a^*$, this reduces to: 
\begin{eqnarray}
\Delta(\bv{\Theta}(t)) - V\expect{\phi(t)|\bv{\Theta}(t)} \leq B - V\phi^* \label{eq:appcfoo2} 
\end{eqnarray}
Taking expectations of the above (with respect to the random $\bv{\Theta}(t))$ and using
the law of iterated expectations gives: 
\[ \expect{L(\bv{\Theta}(t+1))} - \expect{L(\bv{\Theta}(t))} - V\expect{\phi(t)} \leq B - V\phi^* \]
The above holds for all slots $t$.  Summing over $\tau \in \{0, \ldots, M-1\}$ for some integer $M>0$
yields: 
\begin{eqnarray*}
\expect{L(\bv{\Theta}(M))} - \expect{L(\bv{\Theta}(0))} - V\sum_{\tau=0}^{M-1}\expect{\phi(\tau)} \leq \\
M(B - V\phi^*)
\end{eqnarray*}
Dividing by $VM$ and using the fact that $\expect{L(\bv{\Theta}(0))} = 0$ and $\expect{L(\bv{\Theta}(M))}\geq0$ yields: 
\[ -\frac{1}{M}\sum_{\tau=0}^{M-1} \expect{\phi(\tau)} \leq -\phi^* + B/V \]
This holds for all $M>0$, proving the result for the case $\epsilon \leq a^*$. 

We have used the fact that $\epsilon \leq a^*$ only in showing the $Z(t)(\epsilon - a^*)$ term on the right
hand side of (\ref{eq:appcfoo1}) can be removed while preserving the inequality.  However, suppose that
$\epsilon \leq \expect{s(t)}$.  Then the $Z(t)\expect{\epsilon - s(t) - x^*(t)|\bv{\Theta}(t)}$ term in the
right hand side of (\ref{eq:price-drift2}) can immediately be removed (recall that $x^*(t) \geq 0$ and
$\expect{s(t)} = \expect{s(t)|\bv{\Theta}(t)}$
because $s(t)$ is i.i.d. over slots and hence independent of current queue backlog).  This  leads 
directly to (\ref{eq:appcfoo2}) 
regardless of the value of $a^*$.  Thus, the result holds whenever $\epsilon \leq \max[a^*, \expect{s(t)}]$, proving
the theorem.

\bibliographystyle{unsrt}

\end{document}